\newcommand{\ruggedtodo}[2][]{\tikzexternaldisable\todo[#1]{#2}\tikzexternalenable }
\newcommand{\alternatingtodo}[2][]{\iftoggle{lmargin}{\ruggedtodo[#1]{#2}\togglefalse{lmargin}}{{\let\marginpar\marginnote \reversemarginpar \ruggedtodo[#1]{#2}}\toggletrue{lmargin}}\ignorespaces }
\setlist[enumerate, 1]{label=\alph*)} \setlist[description]{font=\normalfont, wide, , labelindent=0pt}
\pgfplotsset{compat=1.18}
\newcommand{\externalizeifable}{\makeatletter
    \ifnum\pdf@shellescape=1\relax \tikzexternalize[prefix=externalized/]\else \relax \fi \makeatother
}
\newtheorem{theorem}{Theorem}
\newtheorem{lemma}[theorem]{Lemma}
\theoremstyle{remark}
\theoremstyle{definition}
\newtheorem{definition}[theorem]{Definition}
\newtheorem{example}[theorem]{Example}
\newtheorem{remark}[theorem]{Remark}
\newcommand{\smoothness}{k}
\newcommand{\xdim}{n}
\newcommand{\nswitches}{s}
\newcommand{\totaldim}{d}
\newcommand{\nineqconstrs}{p}
\newcommand{\neqconstrs}{q}
\newcommand{\totalconstrs}{m}
\newcommand{\optvar}{x}
\newcommand{\absvar}{y}
\newcommand{\switchvar}{z}
\newcommand{\totalvar}{w}
\newcommand{\absobjfct}{\varphi}
\newcommand{\objfct}{f}
\newcommand{\switchfct}{c}
\newcommand{\ineqconstr}{g}
\newcommand{\eqconstr}{h}
\newcommand{\feasible}{\mathcal{F}}
\newcommand{\switchfctdx}{Z}
\newcommand{\switchfctdy}{L}
\newcommand{\switchfctdz}{M}
\newcommand{\ineqconstrdx}{???}
\newcommand{\ineqconstrdy}{???}
\newcommand{\ineqconstrdz}{???}
\newcommand{\eqconstrdx}{???}
\newcommand{\eqconstrdy}{???}
\newcommand{\eqconstrdz}{???}
\NewDocumentCommand{\Cswitch}{ O{\smoothness} }{C_{\operatorname{sw}}^{#1}}
\NewDocumentCommand{\Cabs}{ O{\smoothness, \nswitches} }{C_{\operatorname{abs}}^{#1}}
\NewDocumentCommand{\Ceval}{ O{\smoothness, \nswitches} }{C_{\operatorname{eval}}^{#1}}
\newcommand{\actswitches}{\alpha}
\newcommand{\actineq}{\beta}
\NewDocumentCommand{\projactsw}{ O{\actswitches} }{P_{#1}}
\NewDocumentCommand{\projinactsw}{ O{\actswitches} }{Q_{#1}}
\NewDocumentCommand{\projactineq}{ O{\actineq} }{P_{#1}}
\NewDocumentCommand{\projinactineq}{ O{\actineq} }{Q_{#1}}
\newcommand{\signature}{\sigma}
\NewDocumentCommand{\signdom}{ O{\signature} }{S_{#1}}
 \newcommand{\jac}{\mathrm{D}}
\renewcommand{\switchfctdx}{\jac_1 \switchfct}
\renewcommand{\switchfctdy}{\jac_2 \switchfct}
\renewcommand{\switchfctdz}{\jac_3 \switchfct}
\renewcommand{\ineqconstrdx}{\jac_1 \ineqconstr}
\renewcommand{\ineqconstrdy}{\jac_2 \ineqconstr}
\renewcommand{\ineqconstrdz}{\jac_3 \ineqconstr}
\renewcommand{\eqconstrdx}{\jac_1 \eqconstr}
\renewcommand{\eqconstrdy}{\jac_2 \eqconstr}
\renewcommand{\eqconstrdz}{\jac_3 \eqconstr}
\newcommand{\class}{C}
\NewDocumentCommand{\contdiff}{ O{\smoothness} }{\class^{#1}}
\newcommand{\objfcts}{\contdiff(\reals^\totaldim)}
\newcommand{\switchfcts}{\contdiff_\textup{sw}(\reals^\totaldim; \reals^\nswitches)}
\newcommand{\ineqconstrs}{\contdiff(\reals^\totaldim; \reals^\nineqconstrs)}
\newcommand{\eqconstrs}{\contdiff(\reals^\totaldim; \reals^\neqconstrs)}
\newcommand{\goodconstrs}{\mathcal{G}}
\DeclarePairedDelimiterXPP{\inner}[2]{}{\langle}{\rangle}{}{\ifblank{#1}{\cdot}{#1}, \ifblank{#2}{\bullet}{#2}}
\DeclarePairedDelimiterXPP{\norm}[1]{}{\lVert}{\rVert}{}{\ifblank{#1}{\cdot}{#1}}
\DeclarePairedDelimiterXPP{\seq}[2]{}{\lparen}{\rparen}{_{#2}}{#1}
\DeclarePairedDelimiterXPP{\abs}[1]{}{\lvert}{\rvert}{}{\ifblank{#1}{\cdot}{#1}}
\DeclarePairedDelimiter{\family}{\lbrace}{\rbrace}
\DeclarePairedDelimiter{\paren}{\lparen}{\rparen}
\DeclarePairedDelimiter{\set}{\lbrace}{\rbrace}
\DeclarePairedDelimiter{\tuple}{\lparen}{\rparen}
\newcommand{\D}{\textup{D}}
\newcommand{\Id}[1]{I_{#1}}
\newcommand{\Union}{\bigcup}
\newcommand{\after}{\circ}
\newcommand{\card}[1]{\abs{#1}} \newcommand{\field}[1]{\mathbb{#1}}
\newcommand{\from}{\colon}
\newcommand{\isom}{\cong}
\newcommand{\nats}{\field{N}}
\newcommand{\ints}{\field{Z}}
\newcommand{\oneto}[1]{[#1]}
\newcommand{\pinv}{\dagger}
\newcommand{\reals}{\field{R}}
\newcommand{\transversal}{\pitchfork}
\newcommand{\tspace}[2]{T_{#1} #2}
\newcommand{\union}{\cup}
\newcommand{\with}{\;\colon\;}
\newcommand{\zeroto}[1]{\set{0, \ldots, #1}}
\DeclareMathOperator{\diag}{diag}
\DeclareMathOperator{\img}{img}
\DeclareMathOperator{\rank}{rank}
\DeclareMathOperator{\sign}{sign}
\g@addto@macro\@uclclist{\psi\Psi\omega\Omega\phi\Phi}
\newcommand{\stratification}{\mathcal{\stratifiedset}}
\newcommand{\stratifiedset}{A}
\NewDocumentCommand{\stratum}{ o }{\IfNoValueTF{#1}{X}{\stratifiedset_{#1}}}
\newcommand{\jet}[1]{j^{#1}}
\newcommand{\strjet}[1]{j^{#1}}
\newcommand{\jval}[1]{{\bm{#1}}}
\newcommand{\enlarged}[1]{\MakeUppercase{#1}}
\newcommand{\nfcts}{{\tilde{\nswitches}}}
\newcommand{\multidx}{a}
\newcommand{\stratspace}[1]{\tspace{#1} \stratifiedset}
\newcommand{\totalfct}{\phi}
\newcommand{\largestratification}{\tilde{\stratification}}
\newcommand{\largestratifiedset}{\tilde{\stratifiedset}}
\NewDocumentCommand{\largestratum}{ o }{\IfNoValueTF{#1}{X}{\tilde{\stratifiedset}_{#1}}}
\newcommand{\largestratspace}[1]{\tspace{#1} \largestratifiedset}
\title{How Stringent is the Linear Independence Kink Qualification in Abs-Smooth Optimization?}
\author{Lukas Baumgärtner\,\orcidlink{0000-0003-1007-4815} \and Franz Bethke\,\orcidlink{0000-0002-0553-4242} \and Ganna Shyshkanova\,\orcidlink{0000-0002-0336-2803} \and Andrea Walther\,\orcidlink{0000-0002-3516-4641} }
\begin{document}

\maketitle 

\begin{abstract}
    Abs-smooth functions are given by compositions of smooth functions and the
    evaluation of the absolute value.
The linear independence kink qualification (LIKQ) is a fundamental
    assumption in optimization problems governed by these abs-smooth functions,
    generalizing the well-known LICQ from smooth optimization.
    In particular, provided that LIKQ holds it is possible to derive optimality
    conditions for abs-smooth optimization problems that can be checked in
    polynomial time.
    Utilizing tools from differential topology, namely a version of the
    jet-transversality theorem, it is shown that assuming LIKQ for all feasible
    points of an abs-smooth optimization problem is a generic assumption.
    \noindent 

    \medskip
    \textbf{Keywords:}
abs-normal form,
    genericity,
    jet-transversality,
    linear independence kink qualification,
    nonsmooth optimization,
    piecewise-smooth constraints
\end{abstract}

\section{Introduction}

Many real-world applications
lead to tasks with nonsmooth structures challenging significantly the
corresponding analysis.
Up to now there are hardly any off-the-shelf solution algorithms or software
packages to solve such problems, which is mainly due to the lack of
computationally tractable optimality and stationarity conditions.
For this reason, researchers concentrate on certain classes of nonsmooth
problems like, e.g., semismooth functions to derive new analytical results or
novel solution approaches.

One class of nonsmooth functions that gained some attention in the past years
are functions that are defined by a suitable composition of smooth functions
and the evaluation of the absolute value function, see,
e.g.,~\cite{GW16,HKS21,HS20,WG19}.
The main motivation to consider this set of functions was an extended version
of algorithmic differentiation~\cite{Gri13} to provide generalized derivatives
in a convenient way for nonsmooth functions given as computer programs where
the arguments of absolute value evaluations are evaluated one after the other.
Conceptually, a function \(\absobjfct \from \reals^\xdim \to \reals\) is called
\emph{abs-smooth} if for every input \(x \in \reals^\xdim\) there is a unique
vector \(z \in \reals^\nswitches\) of \(\nswitches \in \nats\) intermediate
values such that the function value \(\absobjfct(x)\) can be expressed by means
of two smooth functions \(\objfct \from \reals^\xdim \times \reals^\nswitches
\times \reals^\nswitches \to \reals\) and \(\switchfct \from \reals^\xdim
\times \reals^\nswitches \times \reals^\nswitches \to \reals^\nswitches\) by
\begin{equation}
    \begin{aligned}
        \absobjfct(x) &= \objfct(x, \abs{z}, z), \label{eq:informal-abs-smooth}\\
        z &= \switchfct(x, \abs{z}, z).
    \end{aligned}
\end{equation}
Here and throughout, the application of the absolute value function \(\abs{}\)
to a vector \(z \in \reals^\nswitches\) is to be understood component wise,
i.e., \(\abs{z}\in\reals^\nswitches\) and \(\abs{z}_i = \abs{z_i}\), \(i \in
\set{1, \ldots, \nswitches}\).
The function \(\switchfct\) will be referred to as \emph{switchting function}
of \(\absobjfct\) and the vector \(z\) corresponding to the input \(x\) will be
referred to as \emph{switchting vector} for \(x\).
Each component of \(\switchfct\) captures a step a program that implements
\(\absobjfct\) takes to compute an intermediate value whose absolute value is
used in the overall computation.
As example consider \(\absobjfct(x) \coloneqq \abs{\abs{x_1} - \abs{x_2}}\)
which can be represented by \(\switchfct(x, y, z) \coloneqq (x_1, x_2, z_1 -
z_2)\) and \(\objfct(x, y, z) \coloneqq y_3\).

An abs-smooth function could serve as target function in a nonsmooth
optimization task as considered, e.g., in~\cite{GW16}.
A collection of abs-smooth functions could represent a system of nonsmooth
equations considered, e.g., in~\cite{Gri15}, or may be used to describe
nonsmooth constraints as in~\cite{HS20}.
In the literature, such functions are also called abs-normal.
However, since the representation in~\eqref{eq:informal-abs-smooth} is by no
means unique and \(\absobjfct\) is a composition of smooth functions and the
absolute value, the term abs-smooth is used throughout this paper.
Furthermore, the list of arguments of an abs-smooth function \(\absobjfct\)
varies in the various contributions.
It was shown in~\cite{Shy+25} that these different formulations are equivalent
regarding the regularity condition introduced below.

As can be seen from the equations in~\eqref{eq:informal-abs-smooth}, the
nonsmoothness is located just in the evaluation of the absolute value.
This is an important property of abs-smooth functions that can be exploited to
derive and state theoretical properties of an abs-smooth function.
A nonsmoothness occurs only if there exists an index \(i\), \(1 \le i \le
\nswitches\), with \(z_i=0\), and probably \(z_i\) switches the sign in an
appropriate neighborhood of this point.
Motivated by the graphical representation in low dimensions, therefore the sets
of points \(x \in \reals^\xdim\), where at least one component of the
corresponding vector \(z\) is equal to zero, are called kinks.
In~\cite{GW16}, the \emph{linear independence kink qualification (LIKQ)} that
is detailed below was introduced.
Showing a close relation to the linear independence constraint qualification
(LICQ) in smooth optimization, LIKQ allows to formulate optimality conditions
that can be verified with polynomial complexity~\cite{GW16, HS20}.
In~\cite{GW16, WG19}, several examples were presented to illustrate the
prerequisites required such that LIKQ holds.
In~\cite[p.~8]{GW16} the authors write: \enquote{In general, there is no reason
why LIKQ should be violated and locally it can always be achieved by an
arbitrary small perturbation \ldots}, however, the argument given there is
rather informal.

To formalize the concept of a property that \enquote{usually} holds for a
problem class, said class is typically parametrized along the defining
functions of a problem.
A problem instance is then a point in the vector space of possible problems and
the property is said to hold, if it holds at all feasible points.

For smooth optimization, the question whether LICQ is \enquote{usually} true
was analyzed in different contributions.
In~\cite{SR79}, the authors consider a condition like LICQ as \emph{generic},
if it holds for almost all problem instances.
This concept has the drawback that the set, where the condition holds, might be
closed such that a small perturbation may lead to a situation where the
condition is no longer true.
For this reason, and more or less in parallel, in~\cite{JT79} the authors
assumed high regularity of the involved functions and used the Whitney topology
to prove that the set of problems where LICQ holds everywhere is dense and
open.
This is a very strong notion of genericity as it ensures that an arbitrarily
small perturbation to the defining functions of a problem instance leads to a
situation in which the LICQ holds at all feasible points, and that this
situation in turn is stable with respect to further perturbations.

Since the concept of abs-smooth functions was motivated originally by
algorithmic differentiation where the smooth components are usually
\(C^\infty\) functions, this paper follows the lines of Jongen and co-workers
to prove that LIKQ is generic in the sense that the set of problem
formulations where this property holds is dense and open under the assumption
that the involved functions are very regular.
Given a \enquote{random} abs-smooth problem, it is then not a strong
restriction to assume that LIKQ holds in particular at a local minimizer.
For mathematical programs with complementarity constraints (MPCCs), it was shown
in~\cite{SS01} that MPCC-LICQ is generic in the sense of Jongen, i.e., the
proof is based on Sard's theorem.
In~\cite{HKS21}, the authors prove that abs-smooth nonlinear optimizations
problems are equivalent to a certain class of MPCCs.
However, this class is just a subset of the MPCCs considered in~\cite{SS01},
and hence, one can not apply the general perturbations that are needed in the
proofs used in~\cite{SS01} to show that MPCC-LICQ is generic for the class of
MPCCs that are equivalent to abs-smooth nonlinear optimizations problems.
Therefore, these two results can not be easily combined to show the genericity
of LIKQ for the abs-smooth problem class.

Genericity of suitable constraint qualifications for other nonsmooth
optimization problem classes have been shown and applied in the critical
point theory in~\cite{JRS09, DSS12, DJS13, LS22}.
Specifically,~\cite{DJS13} invokes the structured jet-transversality theorem
of~\cite{Gün08} for Nash games; a tool that will be crucial for the arguments
presented below.

The rest of the paper is structured as follows.
Section~\ref{sec:abs-smooth} introduces the basic concepts and notations for
abs-smooth optimization problems while Section~\ref{sec:topo} presents some
definitions and results from differential topology that are required for the
application of the structured jet-transversality theorem of~\cite{Gün08} in the
context of abs-smooth problems.
The established formalism is used in Section~\ref{sec:likq-as-transversality}
to encode LIKQ as a transversality condition.
Finally, in Section~\ref{sec:generLIKQ}, the genericity of LIKQ is shown.
Section~\ref{sec:conclusion} gives an outlook to further
research questions.

To derive the theoretical results of this paper, the following
notation is used.
For a finite set \(M\) the cardinality of \(M\) is denoted by \(\card{M}\).
The set of positive integers up to \(n\in\nats\) is denoted by \(\oneto{n}
\coloneqq \set{1, \ldots, n}\).
In particular \(\oneto{0} = \emptyset\).
For a point \(x\) on a manifold \(A\) the tangent space of \(A\) at \(x\) is
denoted by \(\tspace{x}{A}\).

Let \(i, n \in \nats\), \(i \le n\) and let \(X_1, \ldots, X_n, Y\) be finite
dimensional, real vector spaces,
then the \(i\)-th partial derivative of a differentiable function \(f \colon
X_1 \times \cdots \times X_n \to Y\) is denoted by \(\partial_i f\), the
corresponding Jacobian with respect to the canonical basis by \(\D_i f\), and
the full Jacobian by \(\D f\).
Is \(f\) is smooth enough, \(\ell \in \nats\) and \(\multidx \in
\oneto{n}^\ell\) a multi-index, then \(\partial_\multidx f \coloneqq
\partial_{\multidx_\ell} \cdots \partial_{\multidx_1} f\).

For \(n\in\nats\) the identity matrix in \(\reals^{n \times n}\) is denoted by
\(\Id{n}\).
 
\section{Prerequisites from abs-smooth optimization}\label{sec:abs-smooth}
The properties of an abs-smooth function \(\absobjfct\) to a large extent
depend on the function \(\switchfct\).
The rather involved definition of \(\switchfct\) presented below already
prepares the ground for the main result of this paper and takes the fact into
account that the intermediate value \(z_i\) can only be influenced by the
intermediate value \(z_j\) if \(j < i\) which reflects the nature of a computer
program.

Let from here on onward \(\smoothness \in \nats \union \set{\infty}\) and
\(\xdim, \nswitches \in \nats\) be given and let \(\totaldim \coloneqq \xdim +
\nswitches + \nswitches\) and \(\totaldim_i \coloneqq \xdim + 2(i -1)\) for \(i
\in \oneto{\nswitches}\).

\begin{definition}[Switching functions]
    Given an \(\nswitches\)-tuple of functions \(\tuple{\switchfct_1,
    \switchfct_2, \ldots, \switchfct_s}\) with \(\switchfct_i \in
    \contdiff(\reals^{\totaldim_i})\) for \(i \in \oneto{\nswitches}\), the
    function \(\switchfct \from \reals^\totaldim \isom \reals^\xdim \times
    \reals^\nswitches \times \reals^\nswitches \to \reals^\nswitches\) defined
    by
    \begin{equation}
        \label{eq:def-switch-fct}
        \switchfct(x, y, z) \coloneqq
        {(\switchfct_1(x),
            \switchfct_2(x, y_1, z_1),
\ldots,
            \switchfct_\nswitches(x, \tuple{y_1, \ldots, y_{\nswitches-1}}, \tuple{z_1, \ldots, z_{\nswitches-1}})
        )}.
    \end{equation}
    is called a \emph{switching function} of class \(\contdiff\).
The set of all switching functions is denoted by
    \(\switchfcts \subsetneq \contdiff(\reals^\totaldim; \reals^\nswitches)\).
\end{definition}
It is important to note that, for a given \(\switchfct \in \switchfcts\) and
\(x \in \reals^\xdim\), by construction there is a unique solution \(z \in
\reals^\nswitches\) to the so-called \emph{switching equation}
\begin{equation}
    \label{eq:def-switch-eq}
    z = \switchfct(x, \abs{z}, z)
\end{equation}
by simply consecutively evaluating
\begin{equation}
    z_i = \switchfct_i(x, \tuple{\abs{z_1}, \ldots, \abs{z_{i-1}}}, \tuple{z_1, \ldots, z_{i-1}}),
\end{equation}
for all \(i \in \oneto{\nswitches}\), where for \(i = 1\) this is to be
understood as \(z_1 = c_1(x)\).
In particular, the Jacobians
\(\switchfctdy(x, \abs{z}, z) \in \reals^{\nswitches \times \nswitches}\)
and
\(\switchfctdz(x, \abs{z}, z) \in \reals^{\nswitches \times \nswitches}\)
are lower triangular matrices.

Let \(\neqconstrs, \nineqconstrs \in \nats\). For a given switching function
\(\switchfct \in \switchfcts\) and functions \(\ineqconstr \in \ineqconstrs\)
and \(\eqconstr \in \eqconstrs\) let \(\feasible(\switchfct, \ineqconstr,
\eqconstr)\) be defined by
\begin{equation}\label{eq:def-feasible}
    \feasible(\switchfct, \ineqconstr, \eqconstr)
    \coloneqq \set{\tuple{x, z} \in \reals^\xdim \times \reals^\nswitches
                   \colon z = \switchfct(x, \abs{z}, z),
                   \ineqconstr(x, \abs{z}, z) \ge 0,
                   \eqconstr(x, \abs{z}, z) = 0
               }.
\end{equation}
Together with \(\objfct \in \objfcts\) an abs-smooth optimization problem reads
\begin{equation}\label{eq:problem}
    \operatorname{minimize}\quad
    \objfct(x, \abs{z}, z)
    \quad\text{s.t.}\quad
    \tuple{x, z} \in \feasible(\switchfct , \ineqconstr, \eqconstr).
\end{equation}
The pair of functions \((\objfct, \switchfct) \in \objfcts \times \switchfcts\)
can be used to represent a function mapping \(\reals^\xdim\) to \(\reals\).
Given \(x \in \reals^\xdim\) the value \((\objfct, \switchfct)[x] \in \reals\)
is obtained by first computing the unique \(z \in \reals^s\) according to the
switching equation~\eqref{eq:def-switch-eq} and then evaluating
\begin{equation}
    (\objfct, \switchfct)[x] \coloneqq \objfct(x, \abs{z}, z).
\end{equation}
The above notation should indicate the difference to the usual component wise way of
interpreting the evaluation for a tuple of functions, i.e, \((\objfct,
\switchfct)(\totalvar) = (\objfct(\totalvar), \switchfct(\totalvar)) \in
\reals^{\nswitches + 1}\) for \(\totalvar \in \reals^\totaldim\).
If for a given function \(\absobjfct \from \reals^\xdim \to \reals\) it holds that
\(\absobjfct(x) = (\objfct, \switchfct)[x]\) for all \(x \in \reals^\xdim\), then
\(\absobjfct\) is called \emph{abs-smooth} and \((\objfct, \switchfct)\) is called an
\emph{evaluation procedure} of \(\absobjfct\).
The set of abs-smooth functions, i.e., the set of functions \(\absobjfct \from
\reals^\xdim \to \reals\) for which there is an evaluation procedure
\((\objfct, \switchfct) \in \objfcts \times \switchfcts\), is denoted by
\(\Cabs(\reals^\xdim)\).
By extending the switching function \(\switchfct\), it is possible to also
represent functions \(\absobjfct_\ineqconstr \from \reals^\xdim \to
\reals^\nineqconstrs\) and \(\absobjfct_\eqconstr \from \reals^\xdim \to
\reals^\neqconstrs\) whose evaluation involves the absolute value function such
that \(\absobjfct_\ineqconstr(x) = (\ineqconstr, \switchfct)[x]\) and
\(\absobjfct_\eqconstr(x) = (\eqconstr, \switchfct)[x]\).
For all possible evaluation procedures that represent \(\absobjfct\),
\(\absobjfct_\ineqconstr\) and \(\absobjfct_\eqconstr\), the optimization problem
in~\eqref{eq:problem} is equivalent to the problem
\begin{equation}\label{eq:abs-smooth-problem}
    \operatorname{minimize}\quad\absobjfct(x)
    \quad\text{s.t.}\quad
    x \in \set{\tilde{x} \in \reals^\xdim \with \absobjfct_\ineqconstr(\tilde{x}) \ge 0, \absobjfct_\eqconstr(\tilde{x}) = 0}.
\end{equation}

The LIKQ condition is useful to characterize local optima to~\eqref{eq:problem}
or~\eqref{eq:abs-smooth-problem} in terms of certain KKT type conditions.
In order to define LIKQ it is useful to introduce further notation for the
Jacobian matrices of the switching equation in~\eqref{eq:def-switch-eq}, the
inequality constraints \(\ineqconstr\) and the equality constraints \(\eqconstr\)
with respect to the independent variable \(x\).
To that end, let for a given \(z\in\reals^\nswitches\) the matrix of signs of
\(z\) be denoted by \(\Sigma(z) \coloneqq \diag(\sign(z)) \in \set{-1, 0,
1}^{\nswitches \times \nswitches}\) and, given \(x\in\reals^\xdim\), define
\begin{equation}
    \label{eq:def-jacobians}
    \begin{aligned}
        J_{z}(x, z)
            &\coloneqq {(\Id{\nswitches} - \switchfctdy(x, \abs{z},z)\Sigma(z) - \switchfctdz(x, \abs{z},z))}^{-1} \switchfctdx(x, \abs{z},z),\\
        J_{\ineqconstr}(x, z)
            &\coloneqq \ineqconstrdx(x, \abs{z}, z)
            + (\ineqconstrdy(x, \abs{z}, z) \Sigma(z) + \ineqconstrdz(x, \abs{z}, z)) J_{z}(x, z),\\
        J_{\eqconstr}(x, z)
            &\coloneqq \eqconstrdx(x, \abs{z}, z)
            + (\eqconstrdy(x, \abs{z}, z) \Sigma(z) + \eqconstrdz(x, \abs{z}, z)) J_{z}(x, z).
    \end{aligned}
\end{equation}
The set of indices corresponding to active switches and the set of indices
corresponding to active inequality constraints are denoted by
\begin{equation}
    \actswitches(z) \coloneqq \set{i \in \oneto{\nswitches} \colon z_i = 0}
    \qquad\text{and}\qquad
    \actineq(x, z) \coloneqq \set{i \in \oneto{\nineqconstrs} \colon {\ineqconstr(x, \abs{z}, z)}_i = 0}.
\end{equation}
The projection onto the active switching indices is defined by \(\projactsw
\coloneqq \paren{e^T_i}_{i \in \actswitches(z)} \in
\reals^{\card{\actswitches(z)} \times \nswitches}\), where \(e_i \in
\reals^\nswitches\) is the \(i\)-th unit vector while the complementary
projection to the inactive indices is denoted by \(\projinactsw \coloneqq
\paren{e_i^T}_{i \in \oneto{\nswitches} \setminus \actswitches(z)} \in
\reals^{(\nswitches - \card{\actswitches(z)}) \times \nswitches}\).
By a slight abuse of notation, the projection onto the active/inactive
inequality constraints will similarly be denoted by \(\projactineq \coloneqq
(e^T_i)_{i \in \actineq(x, z)} \in \reals^{\card{\actineq(x, z)} \times
\nineqconstrs}\) and \(\projinactineq \coloneqq (e^T_i)_{i \in
\oneto{\nineqconstrs} \setminus \actineq(x, z)} \in \reals^{(\nineqconstrs -
\card{\actineq(x, z)}) \times \nineqconstrs}\).

\begin{definition}[LIKQ]\label{def:likq}
    Given \(\switchfct, \ineqconstr\) and \(\eqconstr\) as before, a point
    \(\tuple{x, z} \in \feasible(\switchfct, \ineqconstr, \eqconstr)\) is said
    to satisfy the \emph{linear independence kink qualification (LIKQ)} at
    \(\tuple{x, z}\), if
    \begin{equation}
        \rank
        \begin{bmatrix}
            \projactsw[\actswitches(z)] J_{z}(x, z)\\
            \projactineq[\actineq(x, z)] J_{\ineqconstr}(x, z)\\
            J_{\eqconstr}(x, z)
        \end{bmatrix}
        = \card{\actswitches(z)} + \card{\actineq(x, z)} + \neqconstrs.
    \end{equation}
\end{definition}

The examples below consider the simple case with just one switch and without
equality and inequality constraints.
\begin{example}[LIKQ everywhere]\label{ex:likq-everywhere-intro}
    Consider the switching function \(\switchfct(\optvar, \absvar, \switchvar)
    \coloneqq -\sin(\optvar)\) which has an active switch at \(x \in \set{\ell
    \pi \with \ell \in \ints}\).
    Since \(\switchfctdy(x, 0, 0) = \switchfctdz(x, 0, 0) = 0\) and
    \(\switchfctdx(x, 0, 0) = -\cos(x) \in \set{-1, 1}\) for all those points,
    the matrix \(\projactsw[\actswitches(z)]J_z(x, y, z)\) is either empty or
    \(\pm 1\), and hence, has full rank everywhere.
\end{example}
\begin{example}[no LIKQ at 0]\label{ex:no-likq-intro}
    For the second example consider \(\switchfct(\optvar, \absvar, \switchvar)
    \coloneqq \sin(\optvar) - \optvar\).
    At \(x = 0\) the switching equation yields \(z = \abs{z} = 0\) and the
    stratum containing \(\jet{0}(0, 0, 0)\) is \(\stratum[(0)]\).
    LIKQ does not hold at \((0, 0)\), since again \(\switchfctdy(0, 0, 0) =
    \switchfctdz(0, 0, 0) = 0\), and hence, \(J_{z}(x, z) = \switchfctdx(0, 0,
    0) = \cos(0) - 1 = 0\).
\end{example}
 
\section{Prerequisites from differential topology}
\label{sec:topo}
{
The arguments in the later sections are based on an extension of the
jet-transversality theorem, see~\cite[Thm.~7.4.5]{JJT00}, for structured jets
in a paper by H.\ Günzel~\cite{Gün08}.
For the sake of a complete and mostly self-consistent presentation the
important definitions of Günzels paper are restated here.

\newcommand{\fct}{\totalfct}
\newcommand{\var}{\totalvar}
\newcommand{\indim}{\totaldim}
\newcommand{\outdim}{\totalconstrs}
\newcommand{\point}{(\jval{\var}, \jval{\fct})}

To not overload the notation that was established in the first part the
notation here deviates in some points from the one that is used for example
in~\cite{JJT00} or~\cite{Gün08}.
This of course also prepares the application of the results in this section to
the abs-smooth problem class.
In the following the reader may think of \(\var \in \reals^\totaldim\) as a
tuple \(\tuple{\optvar, \absvar, \switchvar}\), \(\fct\) as the combined
function \((\switchfct, \ineqconstr, \eqconstr) \from \reals^\totaldim \to
\reals^\totalconstrs\), i.e., \(\fct(\var) = \fct(\optvar, \absvar, \switchvar)
= (\switchfct(\optvar, \absvar, \switchvar), \ineqconstr(\optvar, \absvar,
\switchvar), \eqconstr(\optvar, \absvar, \switchvar))\).
In contrast, \(\enlarged{\fct}\) can be thought of as the \((\nswitches+2)\)-tuple
of functions defining the abs-smooth problem in~\eqref{eq:abs-smooth-problem},
i.e, \(\enlarged{\fct} = (\switchfct_1, \ldots, \switchfct_\nswitches,
\ineqconstr, \eqconstr)\) and for \(i \in \oneto{s+2}\) the vector \(\var_{i}
\in \reals^{\totaldim_i}\) as a possible input to the \(i\)-th component of
\(\enlarged{\fct}\).
The vector \(\enlarged{\var}\) represents a vector of independent inputs to the
function \(\enlarged{\fct}\), i.e, \(\enlarged{\var} = ( \var_{1}, \ldots,
\totalvar_{\nswitches + 2})\).
Moreover, bold font is used to indicate variables that should be thought of as
values of the corresponding function, e.g., \(\jval{\fct} =
\fct(\var)\).

\begin{definition}[stratifications, jet-transversality~{\cite[Def.~7.3.33, Thm.~7.3.4]{JJT00}}]\label{def:stratifications-transversality}
Let \(\indim, \outdim \in \nats\) and \(\stratifiedset \subseteq
    \reals^{\indim+\outdim}\), then a locally finite partition
    \(\stratification\) of \(\stratifiedset\) into pairwise disjoint
    \(\contdiff\) manifolds is called a \emph{stratification} of
    \(\stratifiedset\) of class \(\contdiff\).
    The elements of \(\stratification\) are referred to as \emph{strata} and
    are indexed with some index set \(J\).
    The dimension of \(\stratification\) is defined by
     \begin{equation}
         \dim(\stratification)
         \coloneqq \max_{j \in J} \dim(\stratum[j]).
     \end{equation}

    For a given \(\point \in \stratifiedset\) the stratum containing \(\point\)
    is denoted by \(\stratum[\point]\) and the tangent space at \(\point\) on
    \(\stratum[\point]\) simply by \(\stratspace{\point} \coloneqq
    \tspace{\point}\stratum[\point]\).

    The stratification of \(\stratifiedset\) is called \emph{weakly Whitney
    regular} if for any \(\point \in \stratifiedset\) and any sequence
    \(\seq{\point_\ell}{\ell \in \nats}\) that converges to \(\point\) and
    consists of elements of the same stratum \(\stratum[j] \in
    \stratification\), for some \(j \in J\), the inclusion
    \begin{equation}
        \stratspace{\point} \subseteq \lim_{\ell \to \infty} \tspace{\point_\ell}{\stratum[j]}
    \end{equation}
    holds, whenever the last limit exists in the Grassmann manifold,
    cf.~\cite[Example~1.36]{Lee12}.

    For a function \(\fct \in \contdiff(\reals^\indim; \reals^\outdim)\) the
    \(0\)-jet-extension \(\jet{0}(\fct) \from \reals^\indim \to \reals^{\indim
    + \outdim}\) is given by \(\jet{0}(\fct)(\var) \coloneqq (\var,
    \fct(\var))\).
    The \enquote{\(0\)} in the term \(0\)-jet-extension refers to the fact that
    no derivative information of \(\fct\) is used in the jet-extension; a
    setting that suffices for the arguments in this paper.

The \(0\)-jet-extension \(\jet{0}(\fct)\) is said to meet
    \(\stratification\) \emph{transversally}, denoted
    \(\jet{0}(\fct)\transversal\stratification\), if for all \(\var \in
    {\jet{0}(\fct)}^{-1}(\stratifiedset)\)
    \begin{equation}
        \img(\D \jet{0}(\fct)(\var)) + \stratspace{\jet{0}(\fct)(\var)}
        = \reals^{\indim+\outdim}.
    \end{equation}
\end{definition}
Roughly speaking, the  classical jet-transversality theorem states that for an
open and dense set of functions \(\fct \in \contdiff(\reals^\indim;
\reals^\outdim)\) the 0-jet-extension of \(\fct\) meets any weakly Whitney
regular stratified set transversally.
Transversality theorems can be leveraged by encoding certain properties of the
inputs \(\var\), e.g., feasibility with respect to \(\fct\), into a stratified set
\(\stratifiedset\) and then reformulating the transversality condition into an algebraic
condition, e.g., a constraint qualification.
The transversality theorem then states on the one hand that the acquired
condition will hold true for all inputs \(\var\), possibly after an arbitrarily
small perturbation of \(\fct\) (density), and that it is stable (openness).

The more flexible version for structured jets deals with a finite collection of
functions and allows for the description of \(\stratifiedset\) to depend
differently on different variables.
For the remainder of this section let \(\nfcts \in \nats\) be the number of
involved functions and let \(\indim_1,\ldots,\indim_\nfcts\in \nats\) and
\(\outdim_1,\ldots,\outdim_\nfcts\in \nats\) denote the dimensions of their
respective domain and image spaces.
Furthermore, let \(\enlarged{\indim} \in \nats\) be the total amount of
independent inputs to the \(\nfcts\) functions and \(\outdim \in \nats\)
the total number of their values, i.e,
\begin{equation}
    \enlarged{\indim} \coloneqq \indim_1 + \ldots + \indim_\nfcts
    \quad\text{and}\quad
    \outdim \coloneqq \outdim_1 + \ldots + \outdim_\nfcts.
\end{equation}

\begin{definition}[structured 0-jet-extensions~{\cite[Definition~2.4]{Gün08}}]\label{def:jet-extensions}
    For \(i \in \oneto{\nfcts}\) let \(\fct_i \in \contdiff(\reals^{\indim_i};
    \reals^{\outdim_i})\) and \(\enlarged{\fct} \coloneqq (\fct_1, \ldots,
    \fct_\nfcts) \), then the function
\begin{align}
        \strjet{0}(\enlarged{\fct})
        \colon \reals^{\indim_1} \times \ldots \times \reals^{\indim_\nfcts}
        \to \reals^{\indim_1} \times \ldots \times \reals^{\indim_\nfcts} \times \reals^{\outdim_1} \times \ldots \times \reals^{\outdim_\nfcts}
        \shortintertext{given by}
        \strjet{0}(\enlarged{\fct})(\var_{1}, \ldots, \var_{\nfcts})
        \coloneqq (\var_{1}, \ldots, \var_{\nfcts}, \fct_1(\var_{1}), \ldots, \fct_\nfcts(\var_{\nfcts}))
    \end{align}
    is called \emph{structured 0-jet-extension} of \(\enlarged{\fct}\).
\end{definition}
The tuple of functions \(\enlarged{\fct}\) in the above definition can be
understood as an overall function in \(\contdiff(\reals^{\enlarged{\indim}};
\reals^\outdim)\) by defining its value for \(\enlarged{\var} \coloneqq
(\var_{1}, \ldots, \var_{\nfcts}) \in \reals^{\indim_1} \times \ldots \times
\reals^{\indim_\nfcts} \isom \reals^{\enlarged{\indim}}\) as
\begin{equation}\label{eq:def-structured-eval}
    \enlarged{\fct}(\enlarged{\var}) \coloneqq (\fct_1(\var_{1}), \ldots, \fct_\nfcts(\var_{\nfcts})).
\end{equation}
Note that this evaluation of \(\enlarged{\fct}\) allows to view the
space \(\contdiff(\reals^{\indim_1}; \reals^{\outdim_1}) \times \ldots
\times \contdiff(\reals^{\indim_\nfcts}; \reals^{\outdim_\nfcts})\) as a
subspace of \(\contdiff(\reals^{\enlarged{\indim}}; \reals^\outdim)\).
In fact, the values of the structured jet \(\strjet{0}(\enlarged{\fct})\) are
identical to the values of the classical jet
\(\enlarged{\var} \mapsto (\enlarged{\var}, \enlarged{\fct}(\enlarged{\var}))\)
when using this \emph{structured} way of evaluating \(\enlarged{\fct}\).
This justifies using the same notation for structured jets as for classical
jets.
Moreover, the notation of jet-transversality from
Definition~\ref{def:stratifications-transversality} extends naturally to
structured jets.
However, since the input \(\var_{i}\) does not affect the output of \(\fct_j\)
whenever \(j \ne i\), the previously described subspace relation is strict.
Specifically, the possible perturbations of \(\enlarged{\fct}\) viewed as an
element in \(\contdiff(\reals^{\indim_1}; \reals^{\outdim_1}) \times \ldots
\times \contdiff(\reals^{\indim_\nfcts}; \reals^{\outdim_\nfcts})\) are
different from the possible perturbations of \(\enlarged{\fct}\) as an element
of \(\contdiff(\reals^{\enlarged{\indim}}; \reals^\outdim)\).
The topology in which the perturbations have to be understood is clarified by
the following definition.
\begin{definition}[(strong) Whitney topology]
    \newcommand{\otherfct}{\psi}
    Let \(\ell \in \zeroto{\smoothness}\), \(i \in \oneto{\nfcts}\).
    The sets \(U_{\fct_i, \varepsilon}\), which are indexed by continuous functions
    \(\varepsilon \from \reals^{\indim_i} \to (0, \infty)\)
    and points
    \(\fct_i \in \contdiff(\reals^{\indim_i}; \reals^{\outdim_i})\),
    and which are defined by
    \begin{equation}
        U_{\fct_i, \varepsilon}
        \coloneqq
        \set*{\otherfct \in \contdiff(\reals^{\indim_i}; \reals^{\outdim_i})
            \with
            \begin{aligned}
                \norm{\partial_{\multidx}\fct(\var_{i}) - \partial_{\multidx}\otherfct(\var_{i})} &< \varepsilon(\var_{i})\\
                \text{ for all } \var_{i} \in \reals^{\indim_i}, \card{\multidx} &\le \ell
            \end{aligned}
        },
    \end{equation}
form a basis of neighborhoods of
    the \emph{(strong) Whitney \(\contdiff[\ell]\)-topology} of
    \(\contdiff(\reals^{\indim_i}; \reals^{\outdim_i})\).
\end{definition}
If in the definition one uses only positive constants \(\varepsilon\) instead
of positive functions one obtains a basis of the coarser weak Whitney topology.
However, since \(\reals^{\indim_i}\) is not compact, the convergence with respect
to this simpler topology does not capture the behavior \enquote{at infinity}
very well.
In contrast, the strong topology is an extremely fine topology and in fact is
not metrizable~\cite{Kri69}.
Nevertheless, it forms a Baire space, which allows to meaningfully define
\emph{generic} subsets as sets which contain the intersection of a countable
number of open and dense subsets.
In particular, open and dense sets themselves are generic.
For details see, e.g.,~\cite[p.~34--36, Theorem~4.4.]{Hir76}
and~\cite[p.~306]{JJT00}.

For \(\ell \le \smoothness\) the product topology on
\(\contdiff(\reals^{\indim_1}; \reals^{\outdim_1}) \times \ldots \times
\contdiff(\reals^{\indim_\nfcts}; \reals^{\outdim_\nfcts})\) induced by the
Whitney \(\contdiff[\ell]\)-topologies on the respective spaces will be
referred to as the Whitney \(\contdiff[\ell]\)-topology of
\(\contdiff(\reals^{\indim_1}; \reals^{\outdim_1}) \times \ldots \times
\contdiff(\reals^{\indim_\nfcts}; \reals^{\outdim_\nfcts})\).
This completes the prerequisites required to state the structured
jet-transversality theorem.
\begin{theorem}[structured jet-transversality~{\cite[Theorem~2.5]{Gün08}}]\label{thm:structured-jet-transversality}
    Let \(\largestratification\) be a weakly Whitney regular stratification of
    \(\largestratifiedset \subseteq \reals^{\indim_1} \times \reals^{\outdim_1} \times \ldots
    \times \reals^{\indim_\nfcts} \times \reals^{\outdim_\nfcts}\)with \(\dim(\largestratification) < k + \outdim\).
Then,
    \begin{equation}
        \goodconstrs
        \coloneqq
        \set{\enlarged{\fct} \in \contdiff(\reals^{\indim_1}; \reals^{\outdim_1}) \times \ldots \times \contdiff(\reals^{\indim_\nfcts}; \reals^{\outdim_\nfcts})
            \with
            \strjet{0}(\enlarged{\fct}) \transversal \largestratification
        }
    \end{equation}
    is a dense subset of \(\contdiff(\reals^{\indim_1}; \reals^{\outdim_1}) \times
    \ldots \times \contdiff(\reals^{\indim_\nfcts}; \reals^{\outdim_\nfcts})\) with
    respect to the Whitney \(\contdiff\)-topology.
    If \(\largestratifiedset\) is closed, then \(\goodconstrs\) is additionally open
    with respect to the Whitney \(\contdiff[1]\)-topology, and hence, certainly
    open with respect to the Whitney \(\contdiff\)-topology.
\end{theorem}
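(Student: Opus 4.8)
The plan is to run the classical machinery behind transversality theorems — localize to compact pieces of the domain, prove density there by a parametric (Sard-type) transversality argument, and patch via the Baire property of the strong Whitney topology — while exploiting the one feature that makes \emph{$0$}-jets especially convenient: a perturbation of the \emph{value} of \(\enlarged{\fct}\) requires no control of derivatives and hence can be produced by adding constants.

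First I would reduce the global assertion to countably many local ones. By local finiteness of \(\largestratification\) together with second countability of the ambient space, the stratification has only countably many strata, and I would fix a countable cover of the input space \(\reals^{\indim_1} \times \cdots \times \reals^{\indim_{\nfcts}}\) by compact sets \(K_j\), \(j \in \nats\). For each \(j\) let \(\goodconstrs_j\) collect those \(\enlarged{\fct}\) whose structured \(0\)-jet meets \(\largestratification\) transversally at every preimage point lying over \(K_j\); then \(\goodconstrs = \bigcap_{j} \goodconstrs_j\). Since the strong Whitney topology is Baire, it suffices to show that each \(\goodconstrs_j\) is open and dense, for then \(\goodconstrs\) is a countable intersection of open dense sets and hence dense. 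Over the compact \(K_j\) only finitely many strata are relevant, again by local finiteness, so each local step reduces to finitely many single-stratum statements.

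For density of \(\goodconstrs_j\) I would perturb by constants localized with bump functions. Because \(\strjet{0}(\enlarged{\fct})\) records only the values \(\fct_\iota(\var_\iota)\), adding a constant \(c_\iota \in \reals^{\outdim_\iota}\) to the \(\iota\)-th component shifts exactly the \(\iota\)-th value block of the jet and preserves the structured form. Fixing smooth compactly supported bumps \(\chi_\iota \from \reals^{\indim_\iota} \to \reals\) that equal \(1\) on the projection of \(K_j\), I would study the master map \(F(\enlarged{\var}, c) \coloneqq \strjet{0}\paren*{\fct_1 + \chi_1 c_1, \ldots, \fct_{\nfcts} + \chi_{\nfcts} c_{\nfcts}}(\enlarged{\var})\) with parameter \(c = (c_1, \ldots, c_{\nfcts}) \in \reals^{\outdim}\). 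On the region where all \(\chi_\iota \equiv 1\) — which contains \(K_j\) — the \(c\)-derivatives already surject onto the value directions while the \(\enlarged{\var}\)-derivatives cover the input directions, so \(F\) is a submersion there and thus \(F \transversal \largestratum[i]\) for every relevant stratum \(\largestratum[i]\). The parametric transversality theorem then yields that the slice \(F(\cdot, c)\) is transversal to each such \(\largestratum[i]\) over \(K_j\) for almost every \(c\) — simultaneously for the finitely many relevant strata — hence for arbitrarily small \(c\); as \(\chi_\iota c_\iota\) is compactly supported it is small in the strong Whitney \(\contdiff\)-topology, which gives density. This is exactly the step consuming the hypothesis \(\dim(\largestratification) < \smoothness + \outdim\): since \(F\) is a local diffeomorphism on the submersion region, the incidence manifold \(F^{-1}(\largestratum[i])\) has dimension \(\dim \largestratum[i]\), and the Sard theorem underlying parametric transversality applies to its \(\contdiff\) projection onto the parameter space \(\reals^{\outdim}\) precisely because the relative dimension \(\dim \largestratum[i] - \outdim\) is strictly less than \(\smoothness\).

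Finally, for openness I would invoke weak Whitney regularity and, for the global statement, the closedness of \(\largestratifiedset\). Over the compact \(K_j\) transversality to \(\largestratification\) is an open condition: weak Whitney regularity guarantees that the tangent space at a limit point — possibly sitting on a lower-dimensional stratum — is contained in the limit of the tangent spaces along an approaching stratum, so transversality cannot be lost in a limit, and a compactness argument over \(K_j\) upgrades this to openness of \(\goodconstrs_j\) in the \(\contdiff[1]\)-topology. When \(\largestratifiedset\) is closed these local statements patch to genuine openness of \(\goodconstrs\) in the strong Whitney \(\contdiff[1]\)-topology, because a failure of transversality can then only occur on the closed set \(\largestratifiedset\) and the strong topology is precisely the one controlling the behavior at infinity. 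I expect this openness step to be the main obstacle: carrying out the stratified (rather than smooth-manifold) version of the stability-of-transversality argument, correctly using weak Whitney regularity to prevent degeneration as points migrate between strata, and reconciling this with control at infinity, is the delicate part, whereas the density half is essentially the submersion-plus-Sard computation above.
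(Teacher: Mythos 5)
The first thing to note is that the paper does not prove this theorem at all: it is imported verbatim from G\"unzel~\cite{Gün08}, so your proposal has to be judged against the classical proof pattern (localize, perturb by constants, Sard, Baire) rather than against anything in the paper itself. Your density mechanism is the right one, and it is the part genuinely specific to structured \(0\)-jets: adding bump-localized constants \(\chi_\iota c_\iota\) preserves the structured form, the master map is a local diffeomorphism on the region where all bumps equal one, and Sard's theorem for the \(\contdiff\) projection of the incidence manifold onto the parameter space is exactly what consumes the dimension hypothesis on \(\largestratification\) (each stratum's dimension exceeds the parameter dimension by less than \(\smoothness\)).

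There is, however, a genuine gap in your patching architecture. You cover the \emph{domain} by compacts \(K_j\), define \(\goodconstrs_j\) as transversality at all preimage points over \(K_j\), and need each \(\goodconstrs_j\) to be open and dense for the Baire argument; you claim openness of \(\goodconstrs_j\) from weak Whitney regularity alone. That claim is false when \(\largestratifiedset\) is not closed --- and the density half of the theorem makes no closedness assumption. The failure mode: if \(f_n \to f\) and each \(f_n\) has a non-transversal point \(x_n \in K_j\), then after passing to a subsequence the jets \(\jet{0}(f_n)(x_n)\) lie in one fixed stratum \(\largestratum[i]\), but their limit \(\jet{0}(f)(x)\) can lie in the frontier of \(\largestratum[i]\) \emph{outside} \(\largestratifiedset\), so that \(f\) is vacuously transversal at \(x\) and no contradiction arises. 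Concretely, take the single-stratum, weakly Whitney regular, non-closed set \(\largestratifiedset = \set{(t,0) \with t > 0} \subset \reals^{1+1}\): the function \(f(x) = x^2\) is vacuously transversal, yet the functions obtained by replacing \(f\) with \((x-a)^2\) on \([-1,1]\) via a fixed cut-off converge to \(f\) in the strong \(\contdiff\)-topology as \(a \downarrow 0\) and are non-transversal at \(x = a\). Hence \(\goodconstrs_j\) is not open and your countable intersection does not deliver density. The standard repair is to exhaust the \emph{strata} rather than the domain: for a compact \(L \subset \largestratum[i]\), the set of tuples whose structured jet is transversal to \(\largestratum[i]\) at every point whose jet lands in \(L\) \emph{is} open in the \(\contdiff[1]\)-topology without any closedness or regularity hypothesis (jets converging into the compact \(L\) cannot escape to the frontier), and it is dense by your perturbation argument; \(\goodconstrs\) is then the countable intersection of such sets over the countably many strata and their compact exhaustions. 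This reorganization also shows that weak Whitney regularity plays no role in density; it is needed --- together with closedness of \(\largestratifiedset\) and the variable-\(\varepsilon\) gluing that the strong topology affords --- only in the openness half, where your sketch assembles the right ingredients (condition-(a) stability of transversality under limits, compactness, control at infinity) but leaves the quantitative uniform-margin construction unexecuted.
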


As already stated, the concept of a structured jet and the structured
jet-transversality theorem are useful to restrict the perturbations considered
in the desired space of functions.
However, from a technical point of view that comes with the downside of having
independent variables for every function that is part of the tuple \(\fct\).
Mathematically this is not a problem as relations between these variables can
also be expressed in the stratified set \(\stratifiedset\), see, e.g.,
\cite[Example~3.2]{Gün08}.
The following lemma represents a tool to alleviate this problem by means of a
left-invertible linear operator \(\Pi\) that relates between a few actual
variables and the many formally required variables of the structured jet.
The proof is rather elementary and similar, although not equivalent statements
can be found as exercises in the book~\cite{JJT00}.
Nevertheless, the proof is presented here also with the intent to provide a
reference to this tool for future works.
\begin{lemma}\label{lem:simple-jets}
    Let \(\Pi \in \reals^{\enlarged{\indim} \times \indim}\) be a
    left-invertible matrix and \(J\) an index set for a stratification
    \(\stratification \coloneqq \family{\stratum[j]}_{j \in J}\) of
    \(\stratifiedset \subseteq \reals^{\indim + \outdim}\) of class
    \(\contdiff\).
    Additionally, for \(i \in \oneto{\nfcts}\) let \(\fct_i \in
    \contdiff(\reals^{\indim_i}; \reals^{\outdim_i})\) and \(\enlarged{\fct}
    \coloneqq (\fct_1, \ldots, \fct_\nfcts)\).
Then,
    \begin{enumerate}
        \item\label{it:is-stratification}
            \(\largestratification \coloneqq
            \family{\largestratum[j]}_{j\in J}\) is a stratification of \(\largestratifiedset
            \subseteq \reals^{\enlarged{\indim} + \outdim}\) into \(\contdiff\) manifolds, where
            \begin{equation}
                \largestratum[j] \coloneqq \begin{bmatrix} \Pi & 0 \\ 0 & \Id{\outdim} \end{bmatrix} \stratum[j]
                \qquad\text{and}\qquad
                \tilde\stratifiedset = \begin{bmatrix} \Pi & 0 \\ 0 & \Id{\outdim} \end{bmatrix} \stratifiedset.
            \end{equation}\item\label{it:is-Whitney-regular}
            if \(\stratification\) is weakly Whitney regular, so is \(\largestratification\).

        \item\label{it:is-transversal}
            \(\strjet{0}(\enlarged{\fct}) \transversal \largestratification\)
            if and only if
            \(\jet{0}(\fct) \transversal \stratification\), where \(\fct
            \coloneqq \enlarged{\fct} \after \Pi\)
            using~\eqref{eq:def-structured-eval}.
    \end{enumerate}
\end{lemma}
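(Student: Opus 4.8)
The plan is to reduce all three claims to elementary properties of the single block matrix
\[
    \hat\Pi \coloneqq \begin{bmatrix} \Pi & 0 \\ 0 & \Id{\outdim} \end{bmatrix} \in \reals^{(\enlarged{\indim} + \outdim)\times(\indim + \outdim)},
\]
which is again left-invertible — a left inverse is obtained by replacing $\Pi$ with a left inverse $\Pi^\pinv$ — and is therefore an injective, hence smooth, linear embedding of $\reals^{\indim+\outdim}$ onto the closed subspace $W \coloneqq \img\hat\Pi = \img\Pi \times \reals^\outdim$. The computational backbone is the identity
\[
    \strjet{0}(\enlarged{\fct}) \after \Pi = \hat\Pi \after \jet{0}(\fct), \qquad \fct = \enlarged{\fct}\after\Pi,
\]
which follows at once from~\eqref{eq:def-structured-eval}, since $\strjet{0}(\enlarged{\fct})(\Pi\var) = (\Pi\var, \enlarged{\fct}(\Pi\var)) = (\Pi\var, \fct(\var)) = \hat\Pi\,\jet{0}(\fct)(\var)$. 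Differentiating it (with $\Pi$ linear) relates the images of the two jet-derivatives and drives part~\ref{it:is-transversal}.

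For parts~\ref{it:is-stratification} and~\ref{it:is-Whitney-regular} I would first note $\largestratum[j] = \hat\Pi\,\stratum[j]$, so each $\largestratum[j]$ is the image of a $\contdiff$ manifold under the embedding $\hat\Pi$ and hence a $\contdiff$ submanifold of $W\subseteq\reals^{\enlarged{\indim}+\outdim}$ of the same dimension. Pairwise disjointness and $\largestratifiedset = \hat\Pi\,\stratifiedset = \bigcup_j\largestratum[j]$ are immediate from injectivity of $\hat\Pi$, and local finiteness transfers because $\hat\Pi$ is a homeomorphism onto the closed subspace $W$: points off $W$ have a neighbourhood missing every stratum, while points of $W$ inherit a finite neighbourhood from $\stratification$. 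For weak Whitney regularity I would take a sequence $\hat\Pi\,p_\ell\to\hat\Pi\,p$ inside one $\largestratum[j]$, observe $p_\ell\to p$ in $\stratum[j]$, and use that the derivative of the linear map $\hat\Pi$ is $\hat\Pi$, so $\tspace{\hat\Pi\,p_\ell}{\largestratum[j]} = \hat\Pi\,\tspace{p_\ell}{\stratum[j]}$. Since $\hat\Pi$ induces a continuous, injective, dimension-preserving map on Grassmannians, compactness of the Grassmannian and a subsequence argument show that existence of $\lim_\ell\tspace{\hat\Pi\,p_\ell}{\largestratum[j]}$ forces existence of $V\coloneqq\lim_\ell\tspace{p_\ell}{\stratum[j]}$ with $\hat\Pi V = \lim_\ell\tspace{\hat\Pi\,p_\ell}{\largestratum[j]}$; applying $\hat\Pi$ to the inclusion $\stratspace{p}\subseteq V$ granted by weak Whitney regularity of $\stratification$ then gives $\largestratspace{\hat\Pi\,p} = \hat\Pi\,\stratspace{p} \subseteq \lim_\ell\tspace{\hat\Pi\,p_\ell}{\largestratum[j]}$.

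The heart of the lemma is part~\ref{it:is-transversal}. From the backbone identity, $\strjet{0}(\enlarged{\fct})(\Pi\var)\in\largestratifiedset$ iff $\jet{0}(\fct)(\var)\in\stratifiedset$, and since every stratum lies in $W$, any $\enlarged{\var}$ with $\strjet{0}(\enlarged{\fct})(\enlarged{\var})\in\largestratifiedset$ must satisfy $\enlarged{\var}\in\img\Pi$; this yields a bijection $\var\leftrightarrow\Pi\var$ between the two preimage sets with matching strata $\largestratum[j]=\hat\Pi\,\stratum[j]$. Fixing such a point and writing $p\coloneqq\jet{0}(\fct)(\var)$, $G\coloneqq\img(\D\jet{0}(\fct)(\var))$ and $\tilde G\coloneqq\img(\D\strjet{0}(\enlarged{\fct})(\Pi\var))$, I would record three linear-algebra facts: (i) $\largestratspace{\hat\Pi\,p} = \hat\Pi\,\stratspace{p}\subseteq W$; (ii) $\tilde G$ is the graph of $\D\enlarged{\fct}(\Pi\var)$ over all of $\reals^{\enlarged{\indim}}$, so $\tilde G + (\set{0}\times\reals^\outdim) = \reals^{\enlarged{\indim}+\outdim}$; and (iii) differentiating the backbone identity and taking images gives $\hat\Pi\,G = \tilde G\cap W$, the embedded small-jet image being exactly the slice of the structured one over $\img\Pi$. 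For the direction $\jet{0}(\fct)\transversal\stratification\Rightarrow\strjet{0}(\enlarged{\fct})\transversal\largestratification$, applying $\hat\Pi$ to $G + \stratspace{p} = \reals^{\indim+\outdim}$ gives $(\tilde G\cap W) + \hat\Pi\,\stratspace{p} = W$, so $\set{0}\times\reals^\outdim\subseteq W\subseteq\tilde G + \hat\Pi\,\stratspace{p}$, and with~(ii) this forces $\reals^{\enlarged{\indim}+\outdim} = \tilde G + (\set{0}\times\reals^\outdim)\subseteq\tilde G + \hat\Pi\,\stratspace{p}$. For the converse I would intersect the structured transversality relation with $W$ and invoke the Dedekind modular law (legitimate because $\hat\Pi\,\stratspace{p}\subseteq W$):
\[
    W = W\cap(\tilde G + \hat\Pi\,\stratspace{p}) = (W\cap\tilde G) + \hat\Pi\,\stratspace{p} = \hat\Pi(G + \stratspace{p}),
\]
and injectivity of $\hat\Pi$ with $\img\hat\Pi = W$ then forces $G + \stratspace{p} = \reals^{\indim+\outdim}$.

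The main obstacle is precisely this equivalence: the two transversality statements live in ambient spaces of different dimension, so no single linear isomorphism transports one into the other, and the argument is genuinely asymmetric. The forward direction exploits that the structured jet is a \emph{full} graph over the enlarged domain (fact~(ii)), whereas the backward direction is a clean application of the modular law, both hinging on the slice identity~(iii). I would additionally take care over the dimension bookkeeping in the Grassmannian-limit step of part~\ref{it:is-Whitney-regular}, namely that $\hat\Pi$ preserves subspace dimension so that the limits stay within one component of the Grassmannian.
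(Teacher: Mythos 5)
Your proof is correct. For parts~\ref{it:is-stratification} and~\ref{it:is-Whitney-regular} you follow essentially the same route as the paper: the same block embedding \(\hat\Pi\), the same two-case local-finiteness argument (limit points off versus on the closed subspace \(\img\Pi \times \reals^{\totalconstrs}\)), and the same transport of tangent planes for weak Whitney regularity; the only difference is that you recover the limit of the original tangent planes via compactness of the Grassmannian and uniqueness of subsequential limits, whereas the paper applies the block pseudo-inverse (continuous on planes contained in the image subspace) directly to the limiting plane. Both are valid.

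Part~\ref{it:is-transversal}, the heart of the lemma, is where you genuinely diverge from the paper, and your argument holds up. After the preimage correspondence, which both proofs establish identically, the paper proceeds computationally: it fixes a basis of the stratum's tangent space \(\stratspace{p}\), encodes each transversality statement as a full-row-rank condition on a bordered Jacobian matrix, introduces a matrix \(\Lambda\) spanning the orthogonal complement of \(\img\Pi\), and verifies via an explicit full-rank factorization that the small rank condition and the enlarged one are equivalent. You instead argue coordinate-free from three subspace identities: the transported tangent space, the slice identity \(\hat\Pi G = \tilde G \cap W\) (which is the chain rule \(\D(\enlarged{\totalfct} \after \Pi)(\totalvar) = \D \enlarged{\totalfct}(\Pi \totalvar)\, \Pi\) read on images), and the fact that \(\tilde G\), being a graph over all of \(\reals^{\enlarged{\totaldim}}\), complements \(\set{0} \times \reals^{\totalconstrs}\); the forward implication then follows by pushing the small transversality relation through \(\hat\Pi\), and the converse by the Dedekind modular law (legitimately applied, since \(\hat\Pi \stratspace{p} \subseteq W\)) together with injectivity of \(\hat\Pi\). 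I checked all three identities and both directions; they are sound. What the paper's computation buys is a rank formulation stylistically uniform with the proof of Theorem~\ref{thm:likq-by-transversality}, where transversality is again converted into matrix rank conditions, so the two proofs read in the same idiom; what your argument buys is brevity and conceptual transparency --- no bases, pseudo-inverses, or orthogonal complements, an honest display of the asymmetry of the equivalence (the graph structure of structured jets drives one direction, the modular law the other), and a clear isolation of exactly where left-invertibility of \(\Pi\) enters.
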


\begin{proof}
    \begin{description}
        \item[\ref{it:is-stratification}]
            Clearly, \(\largestratification\) is a partition of
            \(\largestratifiedset\).
            For any \(j \in J\) the mapping
            \begin{equation}
                \reals^{\indim+\outdim} \supset \stratum[j] \to \reals^{\enlarged{\indim}+\outdim},
                \qquad
                (\jval{\var}, \jval{\fct})
                \mapsto
                (\enlarged{\jval{\var}}, \enlarged{\jval{\fct}})
                \coloneqq \begin{bmatrix} \Pi & 0 \\ 0 & \Id{\outdim} \end{bmatrix} (\jval{\var}, \jval{\fct})
            \end{equation}
            is an injective smooth immersion.
            Thus, \cite[Proposition~5.18]{Lee12} ensures that
            \(\largestratum[j]\) is a \(\contdiff\) manifold.
            To show that \(\largestratification\) is locally finite, let some
            \((\enlarged{\jval{\var}}, \enlarged{\jval{\fct}}) \in
            \reals^{\enlarged{\indim} + \outdim}\) be given.
            If \(\enlarged{\jval{\var}} \notin \img \Pi\), then there is a
            whole neighborhood \(U_{\enlarged{\jval{\var}}} \subset
            \reals^{\enlarged{\indim}}\) of \(\enlarged{\jval{\var}}\) with
            \(U_{\enlarged{\jval{\var}}} \cap \img \Pi = \emptyset\) due to
            \(\img \Pi\) being a closed subspace of
            \(\reals^{\enlarged{\indim}}\).
            Then, for any neighborhood \(U_{\enlarged{\jval{\fct}}} \subset
            \reals^\outdim\) of \(\enlarged{\jval{\fct}}\), clearly
            \((U_{\enlarged{\jval{\var}}} \times U_{\enlarged{\jval{\fct}}}) \cap
            \largestratum[j] = \emptyset\) for any \(j \in J\), since
            \(\largestratum[j] \subseteq \img \Pi \times \reals^\outdim\).

            If otherwise \(\enlarged{\jval{\var}} \in \img \Pi\), then there is
            \(\jval{\var} \in \reals^\indim\) with \(\Pi \jval{\var} =
            \enlarged{\jval{\var}}\).
            Since \(\stratification\) is locally finite there is a neighborhood
            \(U_{(\jval{\var}, \jval{\fct})} \subset \reals^{\indim
            + \outdim}\) of \((\jval{\var}, \enlarged{\jval{\fct}})\) such that
            \(\set{j \in J \colon U_{(\jval{\var}, \enlarged{\jval{\fct}})}
            \cap \stratum[j] \neq \emptyset}\) is finite.
            Set
            \begin{equation}
                U_{(\enlarged{\jval{\var}}, \enlarged{\jval{\fct}})}
                \coloneqq \begin{bmatrix} \Pi & 0 \\ 0 & \Id{\outdim} \end{bmatrix} U_{(\jval{\var}, \enlarged{\jval{\fct}})},
            \end{equation}
            then, for any \(j \in J\) with \(U_{(\jval{\var},
            \enlarged{\jval{\fct}})} \cap \stratum[j] = \emptyset\), it holds
            \begin{equation}
                U_{(\enlarged{\jval{\var}}, \enlarged{\jval{\fct}})} \cap \largestratum[j]
                = \begin{bmatrix} \Pi & 0 \\ 0 & \Id{\outdim} \end{bmatrix} U_{(\jval{\var}, \enlarged{\jval{\fct}})}
                \cap
                \begin{bmatrix} \Pi & 0 \\ 0 & \Id{\outdim} \end{bmatrix} \stratum[j]
                = \begin{bmatrix} \Pi & 0 \\ 0 & \Id{\outdim} \end{bmatrix} (U_{(\jval{\var}, \enlarged{\jval{\fct}})} \cap \stratum[j])
                = \emptyset,
            \end{equation}
            where the penultimate equality holds due to \(\Pi\) being
            injective.
            Therefore, the set \(\set{j \in J \colon
            U_{(\enlarged{\jval{\var}}, \enlarged{\jval{\fct}})} \cap
        \largestratum[j] \neq \emptyset}\) is finite.

        \item[\ref{it:is-Whitney-regular}]
            Let \( (\jval{\var}, \jval{\fct})\in \stratifiedset\), then
            \begin{equation}
                \label{eq:transform-tangent-space}
                \largestratspace{(\Pi \jval{\var},\jval{\fct})}
                =
                \begin{bmatrix} \Pi & 0 \\ 0 & \Id{\outdim} \end{bmatrix}
                \stratspace{(\jval{\var}, \jval{\fct})}.
            \end{equation}
            Let \(\seq{\enlarged{\jval{\var}}_\ell,
            \enlarged{\jval{\fct}}_\ell}{\ell\in\nats} \subset
            \largestratum[j]\) for some \(j \in J\) with
            \((\enlarged{\jval{\var}}_\ell, \enlarged{\jval{\fct}}_\ell) \to
            (\enlarged{\jval{\var}}, \enlarged{\jval{\fct}})\) as \(\ell \to
            \infty\) and let \(\tilde{T}\) be the limit of
            \(\tspace{(\enlarged{\jval{\var}}_\ell,
            \enlarged{\jval{\fct}}_\ell)}{\largestratum[j]}\) in the
            corresponding Grassmannian.
            Since, for every \(\ell \in \nats\),
            \((\enlarged{\jval{\var}}_\ell, \enlarged{\jval{\fct}}_\ell) \in
            \largestratum[j]\), in particular \(\enlarged{\jval{\var}}_\ell \in
            \img \Pi\), there is \(\jval{\var}_\ell \in \reals^\indim\) with
            \(\enlarged{\jval{\var}}_\ell = \Pi \jval{\var}_\ell\) and
            \((\jval{\var}_\ell, \enlarged{\jval{\fct}}_\ell) \in \stratum[j]\).
            It holds \((\jval{\var}_\ell, \enlarged{\jval{\fct}}_\ell) \to (\Pi^\pinv
            \enlarged{\jval{\var}}, \enlarged{\jval{\fct}})\) as \(\ell \to \infty\) and
            \begin{equation}
                T \coloneqq
                \begin{bmatrix}
                    \Pi^\pinv & 0\\
                    0 & \Id{\outdim}
                \end{bmatrix}
                \tilde{T}
                =
                \begin{bmatrix}
                    \Pi^\pinv & 0\\
                    0 & \Id{\outdim}
                \end{bmatrix}
                \lim_{\ell \to\infty} \largestratspace{(\enlarged{\jval{\var}}_\ell, \enlarged{\jval{\fct}}_\ell)}
                = \lim_{\ell \to \infty} \largestratspace{(\jval{\var}_\ell, \enlarged{\jval{\fct}}_\ell)}.
            \end{equation}
            Since \(\stratification\) is weakly Whitney regular, it follows
            \(\stratspace{(\Pi^\pinv \enlarged{\jval{\var}}, \enlarged{\jval{\fct}})}
            \subseteq T\).
            Thereby,
            \begin{equation}
                \largestratspace{(\enlarged{\jval{\var}}, \enlarged{\jval{\fct}})}
                =
                \begin{bmatrix}
                    \Pi & 0\\
                    0 & \Id{\outdim}
                \end{bmatrix}
                \stratspace{(\Pi^\pinv \enlarged{\jval{\var}}, \enlarged{\jval{\fct}})}
                \subseteq
                \begin{bmatrix}
                    \Pi & 0\\
                    0 & \Id{\outdim}
                \end{bmatrix} T
                = \tilde{T}.
            \end{equation}

        \item[\ref{it:is-transversal}]
            A vector \(\enlarged{\var}\) is an element of
            \({\jet{0}(\enlarged{\fct})}^{-1}\paren{\largestratifiedset}\) if
            and only if \((\enlarged{\var}, \enlarged{\fct}(\enlarged{\var}))
            \in \largestratifiedset\).
            The latter is the case if and only if \(\enlarged{\var} \in \img
            \Pi\) and \((\Pi^\pinv \enlarged{\var},
            \enlarged{\fct}(\enlarged{\var})) \in \stratifiedset\).
            Since for any \(\enlarged{\var} \in \img \Pi\), it holds
            that
            \begin{equation}
                (\Pi^\pinv \enlarged{\var}, \enlarged{\fct}(\enlarged{\var}))
                = (\Pi^\pinv \enlarged{\var}, \enlarged{\fct}(\Pi\, \Pi^\pinv \enlarged{\var}))
                = \jet{0}(\enlarged{\fct} \after \Pi)(\Pi^\pinv \enlarged{\var})
                = \jet{0}(\fct)(\Pi^\pinv \enlarged{\var}),
            \end{equation}
            the former conditions are all equivalent to
            \(\enlarged{\var} \in \img \Pi \) and
            \(\jet{0}(\fct)(\Pi^\pinv
            \enlarged{\var}) \in \stratifiedset\).
            Finally, this is equivalent to \(\enlarged{\var} \in \Pi
            {\jet{0}(\fct)}^{-1}(\stratifiedset)\), which
            shows that
            \begin{equation}\label{eq:relation-of-preimages}
                {\jet{0}(\enlarged{\fct})}^{-1}\paren{\largestratifiedset}
                = \Pi {\jet{0}(\fct)}^{-1}(\stratifiedset).
            \end{equation}

            \newcommand{\tdim}{t}
            \newcommand{\fbm}{B_{\totaldim}}
            \newcommand{\sbm}{B_{\outdim}}
            Now fix \(\var \in {\jet{0}(\fct)}^{-1}(\stratifiedset)\)
            and let \(\tdim \coloneqq
            \dim(\stratspace{\jet{0}(\fct)(\var)})\) and \(\fbm \in
            \reals^{\indim \times \tdim}\) and \(\sbm \in \reals^{\outdim
            \times \tdim}\) be such that the columns of
            \begin{equation}
                \begin{bmatrix}
                    \fbm \\ \sbm
                \end{bmatrix} \in \reals^{(\indim + \outdim) \times \tdim}
            \end{equation}
            form a basis of \(\stratspace{\jet{0}(\fct)(\var)}\) and the
            condition \(\reals^{\indim + \outdim} = \img(\D
            \jet{0}(\fct)(\var)) +
            \stratspace{\jet{0}(\fct)(\var)}\) is equivalent to
            \begin{equation}\label{eq:rank-second}
                \rank \begin{bmatrix}
                    \Id{\indim} & \fbm\\
                    \D \fct(\var) & \sbm
                \end{bmatrix}
                =
                \rank \begin{bmatrix}
                    \Id{\indim} & \fbm\\
                    \D \enlarged{\fct} (\Pi \var)\Pi & \sbm
                \end{bmatrix}
                = \indim + \outdim.
            \end{equation}
            Let further \(\Lambda \in \reals^{{\enlarged{\indim}} \times
            ({\enlarged{\indim}}-\indim)}\) be left-invertible such that the
            columns of \(\Lambda\) span the
            \(({\enlarged{\indim}}-\indim)\)-dimensional orthogonal complement
            of \(\img \Pi \), i.e., \(\img \Lambda  = {(\img \Pi)}^\bot\).
            Then, \(\Pi^T \Lambda = 0\) and \([\Lambda\
            \Pi]\in\reals^{{\enlarged{\indim}} \times {\enlarged{\indim}}}\)
            has full rank.
            Using this split,~\eqref{eq:rank-second} is equivalent to
            \begin{equation}
                \rank
                \begin{bmatrix}
                    \Id{{\enlarged{\indim}}-\indim} & 0 & 0\\
                    0 & \Id{\indim} & \fbm \\
                    \D \enlarged{\fct}(\Pi \var)\Lambda & \D \enlarged{\fct}(\Pi \var) \Pi & \sbm
                \end{bmatrix}
                = ({\enlarged{\indim}}-\indim) + \indim + \outdim
                = {\enlarged{\indim}} + \outdim.
            \end{equation}
            Moreover, since
            \begin{equation}
                \begin{bmatrix}
                    \Id{{\enlarged{\indim}}-\indim} & 0 & 0\\
                    0 & \Id{\indim} & \fbm \\
                    \D \enlarged{\fct}(\Pi \var)\Lambda & \D \enlarged{\fct}(\Pi \var) \Pi & \sbm
                \end{bmatrix}
                =
                \begin{bmatrix}
                    \Lambda^\dagger & 0\\
                    \Pi^\dagger & 0 \\
                    0 & \Id{\outdim}
                \end{bmatrix}\!
                \begin{bmatrix}
                    \Id{\indim} & \Pi \fbm\\
                    \D \enlarged{\fct}(\Pi \var) & \phantom{\Pi}\sbm
                \end{bmatrix}\!
                \begin{bmatrix}
                    \Lambda & \Pi & 0\\
                    0 & 0 & \Id{\outdim}
                \end{bmatrix}\!
            \end{equation}
            and the transformation matrices have full rank
            \({\enlarged{\indim}} + \outdim\) the above is equivalent to
            \begin{equation}
                \rank
                \begin{bmatrix}
                    \Id{\indim} & \Pi \fbm\\
                    \D \enlarged{\fct}(\Pi \var) & \phantom{\Pi}\sbm
                \end{bmatrix}
                = {\enlarged{\indim}} + \outdim
            \end{equation}
            which is equivalent to
            \begin{equation}
                \reals^{{\enlarged{\indim}} + \outdim}
                = \img(\D \jet{0}(\enlarged{\fct})(\Pi \var)) + \begin{bmatrix} \Pi & 0 \\ 0 & \Id{\outdim} \end{bmatrix}\stratspace{\jet{0}(\enlarged{\fct} \after \Pi)(\var)}.
            \end{equation}
            The equivalence of the transversality conditions then follows by
            \begin{align}
                &\phantom{\iff} \jet{0}(\enlarged{\fct}) \transversal \largestratification \\
                &\stackrel{\text{\eqref{eq:relation-of-preimages}}}{\iff}
                \forall\, \enlarged{\var} \in \Pi {\jet{0}(\fct)}^{-1}(\stratifiedset) \colon
                \reals^{{\enlarged{\indim}} + \outdim} = \img(\D \jet{0}(\enlarged{\fct})(\enlarged{\var})) + \largestratspace{\jet{0}(\enlarged{\fct})(\enlarged{\var})}\\
&\stackrel{\text{\eqref{eq:transform-tangent-space}}}{\iff}
                \forall\, \var \in {\jet{0}(\fct)}^{-1}(\stratifiedset) \colon
                \reals^{{\enlarged{\indim}} + \outdim} = \img(\D \jet{0}(\enlarged{\fct})(\Pi \var)) + \begin{bmatrix} \Pi & 0 \\ 0 & \Id{\outdim} \end{bmatrix}\stratspace{\jet{0}(\enlarged{\fct} \after \Pi)(\var)}\\
&\iff
                \forall\, \var \in {\jet{0}(\fct)}^{-1}(\stratifiedset) \colon
                \reals^{\indim + \outdim} = \img(\D \jet{0}(\fct)(\var)) + \stratspace{\jet{0}(\fct)(\var)}\\
&\stackrel{\text{def.}}{\iff}
                \jet{0}(\fct) \transversal \stratification.
                \qedhere \end{align}
    \end{description}
\end{proof}
}
 
\section{LIKQ as a transversality condition}\label{sec:likq-as-transversality}
For given functions  \(\switchfct \in \switchfcts\), \(\ineqconstr \in
\ineqconstrs\) and \(\eqconstr \in \eqconstrs\), this section establishes a
transversality condition for the remarkable property that all feasible points
\(\tuple{\optvar, \switchvar} \in \feasible(\switchfct, \ineqconstr,
\eqconstr)\) satisfy the LIKQ condition.
To simplify notation set \(\totalconstrs \coloneqq \nswitches + \nineqconstrs +
\neqconstrs\) and let \(\jet{0}(\switchfct, \ineqconstr, \eqconstr)\) be the
standard \(0\)-jet for the combined function \(\tuple{\switchfct, \ineqconstr,
\eqconstr} \from \reals^\totaldim \to \reals^\totalconstrs\), i.e.,
\begin{equation}\label{eq:def-jet}
    \jet{0}(\switchfct, \ineqconstr, \eqconstr)(x, y, z) \coloneqq {(x, y, z, \switchfct(x, y, z), \ineqconstr(x, y, z), \eqconstr(x, y, z))}.
\end{equation}
The definition of the set
\(\stratifiedset \subseteq \reals^{\totaldim + \totalconstrs}\)
as
\begin{equation}\label{eq:def-stratified-set}
    \stratifiedset
    \coloneqq
    \set{\tuple{\jval{x}, \jval{y}, \jval{z}, \jval{\switchfct}, \jval{\ineqconstr}, \jval{\eqconstr}}
         \colon
             \jval{y} = \abs{\jval{z}}, \jval{z} = \jval{\switchfct},
             \jval{\ineqconstr} \ge 0, \jval{\eqconstr} = 0
     }
\end{equation}
then ensures that \(\tuple{x, z} \in \feasible(\switchfct, \ineqconstr,
\eqconstr)\) is equivalent to \(\tuple{x, \abs{z}, z} \in {\jet{0}(\switchfct,
\ineqconstr, \eqconstr)}^{-1}(\stratifiedset)\); which is a fundamental
prerequisite to the subsequent analysis.
A stratification of \(\stratifiedset\) is given by \(\stratification \coloneqq
\set{\stratum[\sigma, \omega] \colon \sigma \in \set{-1, 0, 1}^\nswitches,
\omega\in\set{0, 1}^\nineqconstrs}\),
i.e.,
\begin{equation}
    \stratifiedset
    = \Union_{{\substack{\sigma \in \set{-1, 0, 1}^\nswitches \\[.2em] \omega \in \set{0, 1}^\nineqconstrs}}} \stratum[\sigma, \omega]
    \qquad\text{with}\qquad
    \stratum[\sigma, \omega] \coloneqq  \reals^\xdim \times \stratum[\sigma] \times \stratum[\omega] \times \set{0}^\neqconstrs
\end{equation}
and
\begin{equation}\label{eq:def-strata}
    \begin{aligned}
    \stratum[\sigma]
    & \coloneqq \set*{\tuple{\jval{y}, \jval{z}, \jval{\switchfct}} \in \reals^\nswitches \times \reals^\nswitches \times \reals^\nswitches
                   \colon
                   \jval{y} = \abs{\jval{z}}, \jval{z} = \jval{\switchfct}, \sign(\jval{z}) = \sigma
               },
    \\
    \stratum[\omega]
    & \coloneqq \set{\jval{\ineqconstr} \in \reals^\nineqconstrs
                   \colon
                   \sign(\jval{\ineqconstr}) = \omega
               }.
    \end{aligned}
\end{equation}
To analyze the transversality condition \(\jet{0}(\switchfct, \ineqconstr,
\eqconstr) \transversal \stratifiedset\) it is useful to characterize the
tangent spaces of the strata as images.
This is achieved by the following lemma.
\begin{lemma}[Tangent spaces and Whitney regularity]\label{lem:statum-tspace}
    Define for fixed  \(\sigma \in  \set{-1, 0, 1}^\nswitches\) and
    \(\omega \in \set{0, 1}^\nineqconstrs\) the matrices
    \(\Sigma \coloneqq \diag(\sigma)\) and
    \(\Omega \coloneqq \diag(\omega)\).
    Then, for \(\tuple{\jval{y}, \jval{z}, \jval{\switchfct}} \in
    \stratum[\sigma]\), and \(\jval{g} \in \stratum[\omega]\), the tangent
    spaces are given by
    \begin{equation}\label{eq:tspace-stratum-sigma}
        \tspace{\tuple{\jval{y}, \jval{z}, \jval{\switchfct}}}{\stratum[\sigma]}
         = \img \begin{bmatrix} \abs{\Sigma} \\ \Sigma \\ \Sigma \end{bmatrix}
    \end{equation}
    and \(\tspace{\jval{g}}{\stratum[\omega]} = \img \Omega\).
    Moreover, the stratification \(\stratification\) is weakly Whitney regular.
\end{lemma}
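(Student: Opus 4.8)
The plan is to handle the three assertions in turn, exploiting that each stratum is the restriction of a linear map to an open subset of a coordinate subspace, so that all of its tangent spaces coincide with the image of a single constant matrix.

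First, for \eqref{eq:tspace-stratum-sigma} I would parametrize \(\stratum[\sigma]\) by the free \(\jval{z}\)-coordinates. On the set \(O_\sigma \coloneqq \set{\jval{z} \in \reals^\nswitches \with \sign(\jval{z}) = \sigma}\), which is an open subset of the coordinate subspace \(\img \Sigma = \operatorname{span}\set{e_i \with \sigma_i \neq 0}\), one has \(\abs{\jval{z}} = \Sigma \jval{z}\) since \(\sigma_i \jval{z}_i = \abs{\jval{z}_i}\) for every \(i\). Hence \(\stratum[\sigma]\) is the image of the injective immersion
\begin{equation}
    O_\sigma \to \reals^{3\nswitches}, \qquad \jval{z} \mapsto \tuple{\abs{\jval{z}}, \jval{z}, \jval{z}} = \begin{bmatrix} \Sigma \\ \Id{\nswitches} \\ \Id{\nswitches} \end{bmatrix} \jval{z},
\end{equation}
i.e.\ the restriction of a linear map. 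Its derivative is this constant matrix, so the tangent space at every point of \(\stratum[\sigma]\) is the image of \(\img \Sigma\) under it, namely
\begin{equation}
    \img\!\left( \begin{bmatrix} \Sigma \\ \Id{\nswitches} \\ \Id{\nswitches} \end{bmatrix} \Sigma \right)
    = \img \begin{bmatrix} \Sigma^2 \\ \Sigma \\ \Sigma \end{bmatrix}
    = \img \begin{bmatrix} \abs{\Sigma} \\ \Sigma \\ \Sigma \end{bmatrix},
\end{equation}
where the last equality uses \(\Sigma^2 = \abs{\Sigma}\) as \(\sigma_i \in \set{-1, 0, 1}\). The identity \(\tspace{\jval{g}}{\stratum[\omega]} = \img \Omega\) is the same argument, only easier: \(\stratum[\omega]\) is an open subset of the coordinate subspace \(\img \Omega = \operatorname{span}\set{e_i \with \omega_i = 1}\), whence its tangent space is \(\img \Omega\) at every point.

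The decisive observation for weak Whitney regularity is that both tangent-space formulas, and therefore — by the product structure \(\stratum[\sigma, \omega] = \reals^\xdim \times \stratum[\sigma] \times \stratum[\omega] \times \set{0}^\neqconstrs\) — the tangent space of every stratum, depend only on the index \(\tuple{\sigma, \omega}\) and not on the base point. Consequently, along any fixed stratum the tangent spaces form a constant sequence, whose limit in the Grassmannian exists trivially and equals that one constant space. Weak Whitney regularity thus reduces to a linear-algebra inclusion: if \(\seq{p_\ell}{\ell \in \nats} \subseteq \stratum[\sigma', \omega']\) converges to \(p \in \stratum[\sigma, \omega]\), then \(\tspace{p}{\stratum[\sigma, \omega]} \subseteq \tspace{p_1}{\stratum[\sigma', \omega']}\). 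To see this I would use that the sign function is locally constant away from \(0\): if \(\sigma_i \neq 0\), then the \(\jval{z}_i\)-coordinate of \(p\) is nonzero, forcing the corresponding coordinate of \(p_\ell\) to share its sign for large \(\ell\), so \(\sigma'_i = \sigma_i\); likewise \(\omega_i = 1\) forces the \(\jval{\ineqconstr}_i\)-coordinate of \(p\) to be positive and hence \(\omega'_i = 1\). Reading off the column bases from \eqref{eq:tspace-stratum-sigma} and from \(\img \Omega\), every generating column of \(\tspace{p}{\stratum[\sigma, \omega]}\) — indexed by the \(i\) with \(\sigma_i \neq 0\), respectively \(\omega_i = 1\) — then reappears verbatim among those of \(\tspace{p_1}{\stratum[\sigma', \omega']}\). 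The inclusion therefore holds in each factor of the product, and with it the claimed regularity.

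I expect the only genuine subtlety to be the bookkeeping in this last step: verifying that the sign-continuity constraints relating \(\tuple{\sigma', \omega'}\) to \(\tuple{\sigma, \omega}\) are exactly what nests the two tangent spaces in the direction weak Whitney regularity demands — that of the limiting, lower-dimensional stratum into the approaching, higher-dimensional one. Everything preceding it is a direct computation, because the graph parametrizations linearize the strata and render all their tangent spaces constant.
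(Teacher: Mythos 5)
Your proposal is correct and takes essentially the same approach as the paper: both exploit that each stratum is linearly parametrized by its free \(\jval{z}\)- (resp.\ \(\jval{\ineqconstr}\)-) coordinates, so all tangent spaces are images of constant matrices, and both prove weak Whitney regularity by the sign-continuity argument (\(\sigma_i \ne 0\) forces the approximating stratum to have the same sign in coordinate \(i\)) followed by a column-wise inclusion of the two constant tangent spaces. The only cosmetic difference is that the paper derives the tangent-space formula~\eqref{eq:tspace-stratum-sigma} by a two-sided curve argument, whereas you read it off from the linear immersion \(\jval{z} \mapsto (\Sigma\jval{z}, \jval{z}, \jval{z})\) on the relatively open set \(O_\sigma \subseteq \img\Sigma\) --- same content, slightly more compact packaging.
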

\begin{proof}
    \newcommand{\curve}{\gamma}
    \newcommand{\tvector}{\nu}
    \newcommand{\tvectorpreimage}{\eta}
    Let \(\tvector \in \tspace{\tuple{\jval{y}, \jval{z},
    \jval{\switchfct}}}{\stratum[\sigma]}\), then there is \(\varepsilon > 0\)
    and a differentiable curve \(\curve \from (-\varepsilon, \varepsilon) \to
    \stratum[\sigma]\) with \(\curve(0) = \tuple{\jval{y}, \jval{z},
    \jval{\switchfct}}\) and \(\curve'(0) = \tvector\).
The definition of \(\stratum[\sigma]\)
    implies
    \begin{equation}
        \curve(t)
        \eqqcolon \begin{bmatrix} \curve_y(t) \\ \curve_z(t) \\ \curve_\switchfct(t) \end{bmatrix}
        = \begin{bmatrix} \abs{\curve_z(t)} \\ \curve_z(t) \\ \curve_z(t) \end{bmatrix}
        = \begin{bmatrix} \Id{\nswitches} \\ \diag(\sigma) \\ \diag(\sigma) \end{bmatrix} \abs{\curve_z(t)}
= \begin{bmatrix} \abs{\Sigma} \\ \Sigma \\ \Sigma \end{bmatrix} \Sigma \curve_z(t),
    \end{equation}
    and hence,
    \begin{equation}
        \tvector
        = \curve'(0)
        = \begin{bmatrix} \abs{\Sigma} \\ \Sigma \\ \Sigma \end{bmatrix} \Sigma \curve'_z(0)
        \in \img \begin{bmatrix} \abs{\Sigma} \\ \Sigma \\ \Sigma \end{bmatrix}.
    \end{equation}
For \(\tvector\) in the right-hand side of~\eqref{eq:tspace-stratum-sigma}
    there is \(\tvectorpreimage \in \reals^\nswitches\) such that
    \begin{equation}
        \curve(t)
        \coloneqq \begin{bmatrix} \curve_y(t) \\ \curve_z(t) \\ \curve_\switchfct(t) \end{bmatrix}
        \coloneqq \begin{bmatrix} \jval{y} \\ \jval{z} \\ \jval{\switchfct} \end{bmatrix} + t \tvector
= \begin{bmatrix} \abs{\jval{z}} \\ \jval{z} \\ \jval{z} \end{bmatrix} + t \begin{bmatrix} \abs{\Sigma} \\ \Sigma \\ \Sigma \end{bmatrix} \tvectorpreimage.
    \end{equation}
    The definition of \(\curve\) ensures \(\curve(0) = (\jval{y}, \jval{z},
    \jval{\switchfct})\) and \(\curve'(0) = \tvector\). Therefore, it remains to show
    that in the vicinity of the origin \(\curve\) is a curve on
    \(\stratum[\sigma]\). One has \(\curve_z = \curve_\switchfct\).
    Since for \(i \in \oneto{s}\) the term \((t \Sigma \tvectorpreimage)_i\)
    vanishes whenever \(\sigma_i = 0\), there is a radius
    \(\varepsilon > 0\) such that for \(t < \varepsilon\)
    \begin{equation}
        \sign(\curve_z(t))
        = \sign(\jval{z} + t \Sigma \tvectorpreimage)
        = \sign(\jval{z})
        = \sigma.
    \end{equation}
    In particular,
    \(
        \abs{\curve_z(t)}
        = \Sigma \curve_z(t)
        = \Sigma \jval{z} + t \Sigma^2 \tvectorpreimage
        = \abs{\jval{z}} + t \abs{\Sigma} \tvectorpreimage
        = \curve_y(t)
    \),
    whence, \(\curve(t) \in \stratum[\sigma]\).
    The assertion for the tangent space of \(\stratum[\omega]\) follows
    analogously.

    \newcommand{\totalconstr}{\phi}
    \newcommand{\limit}[1]{\mathring{#1}}
    To show that \(\stratification\) is weakly Whitney regular, let
    \(\ell \in \nats\) and \(\jval{\totalvar}_\ell \coloneqq
    (\jval{\optvar}_\ell, \jval{\absvar}_\ell, \jval{\switchvar}_\ell)\),
    \(\jval{\totalconstr}_\ell \coloneqq (\jval{\switchfct}_\ell,
    \jval{\ineqconstr}_\ell, \jval{\eqconstr}_\ell)\) such that
    \((\jval{\totalvar}_\ell, \jval{\totalconstr}_\ell)\) is a sequence in
    \(\stratum[\sigma, \omega]\) that converges to some limit point
    \((\limit{\jval{\totalvar}}, \limit{\jval{\totalconstr}}) \coloneqq
    (\limit{\jval{x}}, \limit{\jval{y}}, \limit{\jval{z}},
    \limit{\jval{\switchfct}}, \limit{\jval{\ineqconstr}},
    \limit{\jval{\eqconstr}}) \in \stratifiedset\).
    Let \(\limit{\sigma} \coloneqq \sign(\limit{\jval{z}})\), \(\limit{\Sigma}
    \coloneqq \diag(\limit{\sigma})\), \(\limit{\omega} \coloneqq
    \sign(\limit{\jval\ineqconstr})\) and \(\limit{\Omega} \coloneqq
    \diag(\limit{\omega})\).
    Since \((\limit{\jval{y}}, \limit{\jval{z}}, \limit{\jval{\switchfct}})
    \in \stratum[\limit{\sigma}]\) and \(\limit{\jval{\ineqconstr}} \in
    \stratum[\limit{\omega}]\), it suffices to relate \(\sigma\) to
    \(\limit{\sigma}\), and \(\omega\) to \(\limit{\omega}\).
    Now assume there is \(i \in \oneto{\nswitches}\) with \(\sigma_i =
    -\limit{\sigma}_i \ne 0\), then there is a neighborhood of
    \(\limit{\jval{z}}\) in which the same relation holds true, which
    contradicts \(\jval{z}_\ell \to \limit{\jval{z}}\), as \(\sigma =
    \sign(\jval{z}_\ell)\) for all \(\ell \in \nats\).
    Therefore, \(\limit{\sigma}_i = \sigma_i\) or \(\limit{\sigma}_i = 0\) for
    all \(i \in \oneto{\nswitches}\).
    The same argument shows \(\limit{\omega}_i = \omega_i\) or
    \(\limit{\omega}_i = 0\).
    The previously established characterizations of the tangent spaces then yield
    \begin{equation}
        \stratspace{(\jval{\totalvar}, \jval{\totalconstr})}
        =
        \img\begin{bmatrix}
            \Id{\xdim} & 0                    & 0              & 0 \\
            0          & \abs{\limit{\Sigma}} & 0              & 0 \\
            0          & \limit{\Sigma}       & 0              & 0 \\
            0          & \limit{\Sigma}       & 0              & 0 \\
            0          & 0                    & \limit{\Omega} & 0 \\
            0          & 0                    & 0              & 0
        \end{bmatrix}
        \subseteq
        \img\begin{bmatrix}
            \Id{\xdim} & 0            & 0      & 0 \\
            0          & \abs{\Sigma} & 0      & 0 \\
            0          & \Sigma       & 0      & 0 \\
            0          & \Sigma       & 0      & 0 \\
            0          & 0            & \Omega & 0 \\
            0          & 0            & 0      & 0
        \end{bmatrix}
        = \lim_{\ell \to \infty} \tspace{(\jval{\totalvar}_\ell, \jval{\totalconstr}_\ell)}{\stratum[\sigma, \omega]},
    \end{equation}
    where the last limit is taken over a sequence of constant tangent spaces.
\end{proof}
\begin{theorem}[LIKQ as a tranversality condition]\label{thm:likq-by-transversality}
    For any tuple of functions \(\tuple{\switchfct, \ineqconstr, \eqconstr} \in
    \switchfcts \times \ineqconstrs \times \eqconstrs\)
    one has
    \begin{equation}
        \forall \tuple{x, z} \in \feasible(\switchfct, \ineqconstr, \eqconstr) \colon \text{LIKQ holds at } \tuple{x, z}
        \qquad\Longleftrightarrow\qquad
        \jet{0}(\switchfct, \ineqconstr, \eqconstr) \transversal \stratification.
    \end{equation}
\end{theorem}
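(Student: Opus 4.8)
The plan is to establish the equivalence pointwise and then quantify over all feasible points. Write $F \coloneqq \tuple{\switchfct, \ineqconstr, \eqconstr}$. As recorded after~\eqref{eq:def-stratified-set}, a point $w = \tuple{x, y, z}$ lies in ${\jet{0}(F)}^{-1}(\stratifiedset)$ if and only if $y = \abs{z}$ and $\tuple{x, z} \in \feasible(\switchfct, \ineqconstr, \eqconstr)$; hence ${\jet{0}(F)}^{-1}(\stratifiedset) = \set{\tuple{x, \abs{z}, z} \colon \tuple{x, z} \in \feasible(\switchfct, \ineqconstr, \eqconstr)}$, and $\jet{0}(F) \transversal \stratification$ is precisely the assertion that the transversality identity holds at every such $w$. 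I therefore fix a feasible $\tuple{x, z}$, set $w \coloneqq \tuple{x, \abs{z}, z}$, $\sigma \coloneqq \sign(z)$, $\Sigma \coloneqq \diag(\sigma)$, $\omega \coloneqq \sign(\ineqconstr(w))$ and $\Omega \coloneqq \diag(\omega)$, and observe that $\jet{0}(F)(w)$ lies in $\stratum[\sigma, \omega]$ with active sets $\actswitches(z) = \set{i \colon \sigma_i = 0}$ and $\actineq(x, z) = \set{i \colon \omega_i = 0}$. It then remains to show that the transversality identity at $w$ is equivalent to LIKQ at $\tuple{x, z}$.

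Since the top block of $\D\jet{0}(F)(w)$ is $\Id{\totaldim}$, the jet is an immersion and $\img \D\jet{0}(F)(w)$ is the graph $\set{\tuple{v, \D F(w) v} \colon v \in \reals^\totaldim}$. Consequently $\reals^{\totaldim + \totalconstrs} / \img \D\jet{0}(F)(w) \isom \reals^\totalconstrs$ via $\tuple{v, u} \mapsto u - \D F(w)\, v$, and transversality at $w$ is equivalent to the image of $\stratspace{\jet{0}(F)(w)}$ under this quotient map being all of $\reals^\totalconstrs$. Using Lemma~\ref{lem:statum-tspace} I parametrize $\stratspace{\jet{0}(F)(w)}$ by a free $x$-direction $v_x \in \reals^\xdim$, a switch direction $\zeta \in \img\abs{\Sigma}$ whose $\absvar$-, $\switchvar$- and $\switchfct$-components are $\Sigma\zeta$, $\zeta$, $\zeta$ (using $\Sigma^2 = \abs{\Sigma}$), and an inequality direction $\eta \in \img\Omega$ in the $\ineqconstr$-component (the $\eqconstr$-component being zero). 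Substituting into $u - \D F(w)\, v$ and collecting terms, the spanning condition becomes that the columns of
\[
    \begin{bmatrix}
        -\switchfctdx & N\abs{\Sigma} & 0 \\
        -\ineqconstrdx & -B_{\ineqconstr}\abs{\Sigma} & \Omega \\
        -\eqconstrdx & -B_{\eqconstr}\abs{\Sigma} & 0
    \end{bmatrix}
\]
span $\reals^\totalconstrs$, where $N \coloneqq \Id{\nswitches} - \switchfctdy\Sigma - \switchfctdz$, $B_{\ineqconstr} \coloneqq \ineqconstrdy\Sigma + \ineqconstrdz$ and $B_{\eqconstr} \coloneqq \eqconstrdy\Sigma + \eqconstrdz$, all evaluated at $w$.

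The core is a rank-preserving reduction of this matrix. Since $N$ is lower triangular with unit diagonal it is invertible, and left-multiplying by the invertible block-diagonal matrix $\diag(N^{-1}, \Id{\nineqconstrs}, \Id{\neqconstrs})$ turns the first block row into $(-J_z,\ \abs{\Sigma},\ 0)$, because $J_z = N^{-1}\switchfctdx$. Adding $B_{\ineqconstr}$ and $B_{\eqconstr}$ times this first block row to the second and third—another left multiplication by an invertible unipotent matrix—replaces $-\ineqconstrdx$ and $-\eqconstrdx$ by $-(\ineqconstrdx + B_{\ineqconstr} J_z) = -J_{\ineqconstr}$ and $-(\eqconstrdx + B_{\eqconstr} J_z) = -J_{\eqconstr}$, matching~\eqref{eq:def-jacobians}, while simultaneously annihilating the $\abs{\Sigma}$-columns in those rows. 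The result, with unchanged column space, is
\[
    \begin{bmatrix}
        -J_z & \abs{\Sigma} & 0 \\
        -J_{\ineqconstr} & 0 & \Omega \\
        -J_{\eqconstr} & 0 & 0
    \end{bmatrix}.
\]

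Finally, $\abs{\Sigma}$ and $\Omega$ are exactly the projections onto the inactive switches and inactive inequality constraints, so the switch columns (carrying $\abs{\Sigma}$ in the first block row and zeros elsewhere) and the inequality columns (carrying $\Omega$ in the second block row) span precisely the inactive-switch and inactive-inequality coordinate subspaces of $\reals^\totalconstrs$. The reduced matrix therefore has full row rank $\totalconstrs$ if and only if the remaining columns $-J_z, -J_{\ineqconstr}, -J_{\eqconstr}$ surject onto the complementary quotient $\reals^{\card{\actswitches(z)}} \times \reals^{\card{\actineq(x, z)}} \times \reals^\neqconstrs$; since passing to this quotient amounts to applying $\projactsw[\actswitches(z)]$ and $\projactineq[\actineq(x, z)]$ to the first two blocks, the condition is exactly the rank identity of Definition~\ref{def:likq}, i.e.\ LIKQ at $\tuple{x, z}$. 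As $\tuple{x, z}$ was arbitrary, the pointwise equivalence proves the theorem. I expect the main obstacle to be the careful bookkeeping of the active/inactive projections—verifying $\Sigma^2 = \abs{\Sigma}$, that $\zeta$ ranges exactly over $\img\abs{\Sigma}$, and that the two invertible left multiplications both preserve the column space and reproduce the Jacobians of~\eqref{eq:def-jacobians}—while the remaining steps are routine linear algebra.
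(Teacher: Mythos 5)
Your proposal is correct and takes essentially the same route as the paper's proof: both rest on Lemma~\ref{lem:statum-tspace}, turn the transversality condition into a matrix spanning condition, and apply the same row reduction --- inverting \(\Id{\nswitches}-\switchfctdy\Sigma-\switchfctdz\), then eliminating via \(\ineqconstrdy\Sigma+\ineqconstrdz\) and \(\eqconstrdy\Sigma+\eqconstrdz\) --- to surface \(J_{z}\), \(J_{\ineqconstr}\), \(J_{\eqconstr}\) and read off LIKQ from the inactive-index columns \(\abs{\Sigma}\) and \(\Omega\). The only difference is organizational: you quotient out the graph \(\img(\D\jet{0}(\switchfct,\ineqconstr,\eqconstr)(x,y,z))\) at the outset, so all matrices have \(\totalconstrs\) rows, whereas the paper carries the \(\Id{\totaldim}\) blocks through a \((\totaldim+\totalconstrs)\)-rowed rank condition and discards the trivially independent rows at the end.
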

Before the proof of Theorem~\ref{thm:likq-by-transversality} is presented here,
the two examples of Section~\ref{sec:abs-smooth} are revisited to provide an
intuition of the relation between transversality \(\jet{0}(\switchfct,
\ineqconstr, \eqconstr) \transversal \stratifiedset\) and the LIKQ condition
of~Definition~\ref{def:likq}.
\begin{example}[LIKQ everywhere]\label{ex:likq-everywhere}

    Figure~\ref{fig:likq-examples}\,\subref{subfig:likq-example} shows the
    jet-space of Example~\ref{ex:likq-everywhere-intro} without the
    \(\jval{z}\)-dimension.
    The feasible points can be recognized in the jet-space as the intersection
    of the stratified set \(\stratifiedset\) with the image of the jet
    \(\jet{0}(\switchfct)\).
    At all points, in particular the feasible ones, the \(\jval{y}\)-axis is
    part of the linearization of the image of the jet \(\jet{0}(\switchfct)\).
    Moreover, the \(\jval{x}\)-axis is part of the tangent space at any
    (feasible) point in a stratum of \(\stratifiedset\).
    Finally, for all feasible points \((\jval{x}, \jval{y}, \jval{c})\) with
    \(\jval{c} \ne 0\) the tangent spaces contain a third linear independent
    direction \((0, 1, \pm 1)\) which completes the combined dimensions to
    \(3\).
    If on the other hand \(\jval{c} = 0\) for a feasible point then it is a
    point on the stratum \(\stratum[(0)]\) and the tangent space is
    only one-dimensional.
    However, then the \(\jval{c}\)-axis is part of the linearization of the jet
    which again ensures transversality.
\end{example}
\begin{figure}
    \newcommand{\samples}{80}
    \newcommand{\opacity}{0.7}
    \newcommand{\domxmax}{1.5}
    \centering \begin{subfigure}[t]{.5\textwidth}
        \centering
        \clearpage{}

\tikzsetnextfilename{\jobname-likq-example}
\begin{tikzpicture}
    \begin{axis}[
xlabel={\(\jval{x}\)},
        ylabel={\(\jval{y}\)},
        zlabel={\(\jval{c}\)},
xmin=-2.5, xmax=2.5,
        ymin=-1.1, ymax=2.3,
        zmin=-2.1, zmax=3.0,
        view={65}{35},
        axis lines=center,
xtick=\empty,
        ytick=\empty,
        ztick=\empty,
        xlabel style={anchor=west},
        ylabel style={anchor=west},
        zlabel style={anchor=south},
]

        \addplot3 [
        domain y=0:2,
        domain=-2:\domxmax,
        surf,
        faceted color=none,
        fill=pink,
        opacity=2*\opacity,
        samples y=\samples,
        samples=\samples,
        unbounded coords=jump,
        x filter/.expression={sin(deg(x)) >= y ? nan : x},
        shader=flat,
        draw opacity=0,
        ] {-y};
        \label{plot:likq-stratification}

        \addplot3 [
        domain y=0:2,
        domain=-2:\domxmax,
        surf,
        faceted color=none,
        fill=pink,
        opacity=2*\opacity,
        samples y=\samples,
        samples=\samples,
        unbounded coords=jump,
        x filter/.expression={-sin(deg(x)) < y ? nan : x},
        shader=flat,
        draw opacity=0,
        ] {y};

        \addplot3 [
        domain=-2:\domxmax,
        draw=red!50!pink,
        samples=\samples,
        very thick,
        x filter/.expression={- sin(deg(x)) <= y ? nan : x},
        ] ({x}, {0}, {0});
        \label{plot:likq-stratum}

        \addplot3 [
        domain y=-1:2,
        domain=-2:\domxmax,
        surf,
        faceted color=none,
        fill=teal,
        opacity=\opacity,
        samples y=\samples,
        samples=\samples,
shader=flat,
        draw opacity=0,
        ] {- sin(deg(x))};
        \label{plot:likq-jet-image}

        \addplot3 [
        domain=-1.1:0,
        samples=4,
        samples y=4,
        ] ({0}, {y}, {0});

        \addplot3 [
        domain y=0:2,
        domain=-2:\domxmax,
        surf,
        faceted color=none,
        fill=pink,
        opacity=2*\opacity,
        samples y=\samples,
        samples=\samples,
        unbounded coords=jump,
        x filter/.expression={sin(deg(x)) < y ? nan : x},
        shader=flat,
        draw opacity=0,
        ] {-y};

        \addplot3 [-stealth,
        domain=\domxmax:2.5,
        samples=4,
        ] ({x}, {0}, {0});

        \addplot3 [
        domain=-2:\domxmax,
        draw=blue!20!teal,
        samples=\samples,
        samples y=0,
        very thick,
] ({x}, {abs(-sin(deg(x)))}, {-sin(deg(x))});
        \label{plot:likq-feasible-set}

        \addplot3 [
        domain y=0:2,
        domain=-2:\domxmax,
        surf,
        faceted color=none,
        fill=pink,
        opacity=2*\opacity,
        samples y=\samples,
        samples=\samples,
        unbounded coords=jump,
        x filter/.expression={- sin(deg(x)) >= y ? nan : x},
        shader=flat,
        draw opacity=0,
        ] {y};

        \addplot3 [
        domain=-2:\domxmax,
        draw=red!50!pink,
        samples=\samples,
        very thick,
        x filter/.expression={-sin(deg(x)) >= y ? nan : x},
        ] ({x}, {0}, {0});

        \addplot3 [draw, black]
        coordinates
        {
            (0., 0., 0.)
            (0., 0., 2.9)
        }
        ;
    \end{axis}
\end{tikzpicture}
\clearpage{}
        \caption{Example with LIKQ at all feasible points.}
        \label{subfig:likq-example}
    \end{subfigure}\hfill
    \begin{subfigure}[t]{.5\textwidth}
        \centering
        \clearpage{}

\tikzsetnextfilename{\jobname-no-likq-example}
\begin{tikzpicture}
    \begin{axis}[
xlabel={\(\jval{x}\)},
        ylabel={\(\jval{y}\)},
        zlabel={\(\jval{c}\)},
xmin=-2.5, xmax=2.5,
        ymin=-1.1, ymax=2.3,
        zmin=-2.1, zmax=3.0,
        view={65}{35},
        axis lines=center,
xtick=\empty,
        ytick=\empty,
        ztick=\empty,
        xlabel style={anchor=west},
        ylabel style={anchor=west},
        zlabel style={anchor=south},
]

        \addplot3 [
        domain y=0:2,
        domain=-2:\domxmax,
        surf,
        faceted color=none,
        fill=pink,
        opacity=2*\opacity,
        samples y=\samples,
        samples=\samples,
        unbounded coords=jump,
        x filter/.expression={-(sin(deg(x)) - x) >= y ? nan : x},
        shader=flat,
        draw opacity=0,
        ] {-y};
        \label{plot:no-likq-stratification}

        \addplot3 [
        domain y=0:2,
        domain=-2:\domxmax,
        surf,
        faceted color=none,
        fill=pink,
        opacity=2*\opacity,
        samples y=\samples,
        samples=\samples,
        unbounded coords=jump,
        x filter/.expression={sin(deg(x)) - x < y ? nan : x},
        shader=flat,
        draw opacity=0,
        ] {y};

        \addplot3 [
        domain=-2:\domxmax,
        draw=red!50!pink,
        samples=\samples,
        very thick,
        x filter/.expression={sin(deg(x)) - x <= y ? nan : x},
        ] ({x}, {0}, {0});
        \label{plot:no-likq-stratum}

        \addplot3 [
        domain y=-1:2,
        domain=-2:\domxmax,
        surf,
        faceted color=none,
        fill=teal,
        opacity=\opacity,
        samples y=\samples,
        samples=\samples,
shader=flat,
        draw opacity=0,
        ] {sin(deg(x)) - x};
        \label{plot:no-likq-jet-image}

        \addplot3 [
        domain=-1.1:0,
        samples=4,
        samples y=4,
        ] ({0}, {y}, {0});

        \addplot3 [
        domain y=0:2,
        domain=-2:\domxmax,
        surf,
        faceted color=none,
        fill=pink,
        opacity=2*\opacity,
        samples y=\samples,
        samples=\samples,
        unbounded coords=jump,
        x filter/.expression={-(sin(deg(x)) - x) < y ? nan : x},
        shader=flat,
        draw opacity=0,
        ] {-y};

        \addplot3 [-stealth,
        domain=\domxmax:2.5,
        samples=4,
        ] ({x}, {0}, {0});

        \addplot3 [
        domain=-2:\domxmax,
        draw=blue!20!teal,
        samples=\samples,
        samples y=0,
        very thick,
] ({x}, {abs(sin(deg(x)) - x)}, {sin(deg(x)) - x});
        \label{plot:no-likq-feasible-set}

        \addplot3 [
        domain y=0:2,
        domain=-2:\domxmax,
        surf,
        faceted color=none,
        fill=pink,
        opacity=2*\opacity,
        samples y=\samples,
        samples=\samples,
        unbounded coords=jump,
        x filter/.expression={sin(deg(x)) - x >= y ? nan : x},
        shader=flat,
        draw opacity=0,
        ] {y};

        \addplot3 [
        domain=-2:\domxmax,
        draw=red!50!pink,
        samples=\samples,
        very thick,
        x filter/.expression={sin(deg(x)) - x >= y ? nan : x},
        ] ({x}, {0}, {0});

        \addplot3 [draw, black]
        coordinates
        {
            (0., 0., 0.)
            (0., 0., 2.9)
        }
        ;
    \end{axis}
\end{tikzpicture}
\clearpage{}
        \caption{Example without LIKQ at \(0\).}
        \label{subfig:no-likq-example}
    \end{subfigure}\caption{Two examples that show different situations with respect to the LIKQ
        condition.
        The pink area (\protect\ref{plot:no-likq-stratification}) indicates the
        stratified set \(\stratifiedset\) and the teal area
        (\protect\ref{plot:no-likq-jet-image}) indicates the image of the jet
        \(\jet{0}(\switchfct)\).
        The intersection of the former two, the feasible set
        \(\feasible(\switchfct)\), is depicted as the blue line
        (\protect\ref{plot:no-likq-feasible-set}).
        Finally, the red line indicates (\protect\ref{plot:no-likq-stratum})
        the stratum \(\stratum[(0)]\).
    }\label{fig:likq-examples}
\end{figure}
\begin{example}[no LIKQ at 0]
Figure~\ref{fig:likq-examples}\,\subref{subfig:no-likq-example} depicts the
    situation of Example~\ref{ex:no-likq-intro}.
    As in Example~\ref{ex:likq-everywhere}, the tangent space of
    \(\stratum[(0)]\) is just the \(\jval{x}\)-axis.
    However, here the linearization of \(\jet{0}(x, y, z)\) at \(0\) spans the
    \(\jval{x}\)-\(\jval{y}\)-plane leaving the \(\jval{c}\)-axis as a linear
    independent dimension.
\end{example}
\begin{proof}
    By definition the transversality \(\jet{0}(\switchfct, \ineqconstr,
    \eqconstr) \transversal \stratification\), holds if and only if for all
    \(\tuple{x, y, z} \in {\jet{0}(\switchfct, \ineqconstr,
    \eqconstr)}^{-1}(\stratifiedset)\)
    \begin{equation}
        \label{eq:transversality-condition-likq}
        \img(\D \jet{0}(\switchfct, \ineqconstr, \eqconstr)(x, y, z)) + \tspace{\jet{0}(\switchfct, \ineqconstr, \eqconstr)(x, y, z)}{\stratum[\sigma, \omega]}
        = \reals^{\totaldim + \totalconstrs},
    \end{equation}
    where \(\stratum[\sigma, \omega]\) is the stratum containing
    \(\tuple{\jval{x}, \jval{y}, \jval{z}, \jval{\switchfct},
    \jval{\ineqconstr}, \jval{\eqconstr}} \coloneqq \jet{0}(\switchfct,
    \ineqconstr, \eqconstr)(x, y, z)\).
    By the definitions of the sets $\stratum[\sigma]$ and $\stratum[\omega]$ in
    Equation~\eqref{eq:def-strata}, one obtains \(\sigma = \sign(\jval{z}) =
    \sign(z)\), \(\omega = \sign(\jval{\ineqconstr})\) and \(y = \abs{z}\).
    Define \(\Sigma \coloneqq \diag(\sigma)\) and \(\Omega \coloneqq
    \diag(\omega)\), then by Lemma~\ref{lem:statum-tspace} the
    condition~\eqref{eq:transversality-condition-likq} is equivalent to
\begin{equation}\label{eq:tansversality-condition-as-matrix-rank}
        \rank
        \begin{bmatrix}
            \Id{\xdim}    & 0               & 0               & \Id{d} & 0            & 0 \\
            0             & \Id{\nswitches} & 0               & 0      & \abs{\Sigma} & 0 \\
            0             & 0               & \Id{\nswitches} & 0      & \Sigma       & 0 \\
            \switchfctdx  & \switchfctdy    & \switchfctdz    & 0      & \Sigma       & 0 \\
            \ineqconstrdx & \ineqconstrdy   & \ineqconstrdz   & 0      & 0            & \Omega \\
            \eqconstrdx   & \eqconstrdy     & \eqconstrdz     & 0      & 0            & 0
        \end{bmatrix}
        = \totaldim + \totalconstrs,
    \end{equation}
    where the trivial last column in the characterization of the tangent spaces
    is left out, since it does not add to the overall rank, and the
    dependencies on the evaluation point \((x, \abs{z}, z)\) is omitted in the
    partial derivatives of \(\switchfct\), \(\ineqconstr\) and \(\eqconstr\)
    for notational ease.
\newcommand{\Tz}{S}\newcommand{\Tg}{(\ineqconstrdy\Sigma + \ineqconstrdz)\Tz}\newcommand{\Th}{(\eqconstrdy\Sigma + \eqconstrdz)\Tz}Multiplication from the left of the above matrix with the full rank matrix
    \begin{equation}
        \begingroup \setlength\arraycolsep{2.6pt}
        \begin{bmatrix}
            \Id{\xdim} & 0                                 & 0                                  & 0   & 0                  & 0                \\
            0          & \Id{\nswitches}                   & 0                                  & 0   & 0                  & 0                \\
            0          & 0                                 & \Id{\nswitches}                    & 0   & 0                  & 0                \\
            0          & -\Tz \switchfctdy                 & -\Tz \switchfctdz                  & \Tz & 0                  & 0                \\
            0          & -\Tg \switchfctdy - \ineqconstrdy & - \Tg \switchfctdz - \ineqconstrdz & \Tg & \Id{\nineqconstrs} & 0                \\
            0          & -\Th \switchfctdy - \eqconstrdy   & - \Th \switchfctdz - \eqconstrdz   & \Th & 0                  & \Id{\neqconstrs} \\
        \end{bmatrix}\!\!,
        \endgroup
    \end{equation}
    where \(\Tz \coloneqq \paren{\Id{\nswitches} - \switchfctdy\Sigma -
    \switchfctdz}^{-1} \in \reals^{\nswitches \times \nswitches}\),
yields
    \begin{equation}
        \rank
        \begin{bmatrix}
            \Id{\xdim} & 0 & 0 & \Id{\xdim} & 0 & 0 \\
            0 & \Id{\nswitches} & 0 & 0 & \abs{\Sigma} & 0 \\
            0 & 0 & \Id{\nswitches} & 0 & \Sigma & 0 \\
            J_{z}(x, z) & 0 & 0 & 0 & \Sigma & 0 \\
            J_{\ineqconstr}(x, z) & 0 & 0 & 0 & 0 & \Omega \\
            J_{\eqconstr}(x, z) & 0 & 0 & 0 & 0 & 0 \\
        \end{bmatrix} = \totaldim + \totalconstrs
    \end{equation}
    as an equivalent characterization of transversality.
    Clearly, the first \(\totaldim = \xdim + \nswitches + \nswitches\) rows of
    this matrix are independent, and thus, the final reformulation of the
    transversality condition in~\eqref{eq:transversality-condition-likq} reads
    \begin{equation}
       \rank \begin{bmatrix}
           J_{z}(x,z) & \Sigma & 0 \\
           J_{\ineqconstr}(x,z) & 0      & \Omega \\
           J_{\eqconstr}(x,z) & 0      & 0
        \end{bmatrix} = \totalconstrs.
    \end{equation}
    This is exactly the LIKQ condition in Definition~\ref{def:likq} if the rows
    that are trivially linear independent from the rest, due to nonzero entries
    in \(\Sigma\) or \(\Omega\), are removed from the matrix.
\end{proof}

\begin{remark}[No addtional constraints]\label{rmk:no-additional-constraints}
    An analogous statement to Theorem~\ref{thm:likq-by-transversality} can be
    derived, when considering the problem class without additional inequality
    and equality constraints.
    In that case the LIKQ condition of Definition~\ref{def:likq} reduces to a
    full rank condition of \(\projactsw{\actswitches(\switchvar)}
    J_\switchvar(\optvar, \switchvar)\).
    Changing of the definition of the feasible and stratified set accordingly,
    i.e., to \(\feasible(\switchfct) \coloneqq \set{(\optvar, \switchvar) \with
    \switchvar = \switchfct(\optvar, \abs{\switchvar}, \switchvar)}\) and
    \begin{equation}
        \stratifiedset
        \coloneqq \Union_{\signature \in \set{-1, 0, 1}^\nswitches}
                  \set{(\jval{\optvar}, \jval{\absvar}, \jval{\switchvar}, \jval{\switchfct)})
                  \with \jval{\switchvar} = \jval{\switchfct}, \jval{\absvar} = \abs{\jval{\switchvar}}, \sign(\jval{\switchvar}) = \signature}
    \end{equation}
    one can use virtually the same arguments as in the proof of
    Theorem~\ref{thm:likq-by-transversality}.
    The only difference being that the last two rows and the last column of the
    matrix in~\eqref{eq:tansversality-condition-as-matrix-rank} are not present.
\end{remark}
\begin{remark}[No free switching variable]\label{rmk:no-free-switching-var}
    Earlier papers on abs-smooth optimization, in particular~\cite{Gri13, GW16,
    WG19}, used a different formalization to represent an abs-smooth function.
    Therein, the functions in the evaluation procedure do not depend explicitly
    on the switching variable \(\switchvar\) but only on its absolute value.
    That is, the representation of \(\absobjfct \in \Cabs(\reals^\xdim)\) reads
    \begin{equation}
        \begin{aligned}
            \absobjfct(\optvar) &= \objfct(\optvar, \abs{\switchvar}),\\
            \switchvar &= \switchfct(\optvar, \abs{\switchvar}).
        \end{aligned}
    \end{equation}
    The same can be done for abs-smooth inequality constraints and abs-smooth
    equality constraints prescribed by functions \(\absobjfct_\ineqconstr\) and
    \(\absobjfct_\eqconstr\) respectively.
    Consequently, in such a formulation the matrices
    in~\eqref{eq:def-jacobians} would not involve the corresponding partial
    derivatives with respect to a third argument.
    However, the definition of the feasible set, the jet and the strata
    in~\eqref{eq:def-feasible},~\eqref{eq:def-jet} and~\eqref{eq:def-strata}
    would need to remain unchanged to properly encode feasibility.
    The then seemingly unjustified input variable \(\switchvar\) of the jet can
    be explained by introducing artificial function \(\psi(\optvar, \absvar,
    \switchvar)\) and considering the reduced structured jet that drops the
    actual value of \(\psi\), c.f.~\cite{Gün08}.
    In the later context on genericity this additional freedom in the possible
    pertubations to the problem description needs to be justified, which will
    be done in Remark~\ref{rmk:genericity-other-settings}.
    In a proof of a theorem analogous to
    Theorem~\ref{thm:likq-by-transversality} basically nothing would change,
    except that all partial derivatives with respect to \(\switchvar\) are
    replaced by \(0\)-matrices of the corresponding dimension.
\end{remark}
 
\section{Genericity of LIKQ}\label{sec:generLIKQ}
As already discussed after Definition~\ref{def:jet-extensions} the
perturbations that the standard jet-trans\-ver\-sality
theorem~\cite[Theorem~7.4.5]{JJT00} for \(\jet{0}(\switchfct, \ineqconstr,
\eqconstr)\) and \(\stratifiedset\) as defined in~\eqref{eq:def-jet}
and~\eqref{eq:def-stratified-set} considers are too general.
In particular, when perturbing \(\switchfct\) in \(\contdiff(\reals^\totaldim;
\reals^\totalconstrs)\) even slightly with respect to the strong Whitney
topology, the result may not be a valid switching function anymore.
In contrast, perturbing the functions \(\switchfct_1, \ldots,
\switchfct_\nswitches\) that define \(\switchfct\)
via~\eqref{eq:def-switch-fct} individually will always result in a valid
switching function.
However, since the signatures of the \(\switchfct_i\), \(i \in
\oneto{\nswitches}\) differ, this requires the use of the structured
jet-transversality Theorem~\ref{thm:structured-jet-transversality}.
\begin{theorem}[LIKQ is a generic assumption]\label{thm:genericity-of-likq}
    Assume \(\smoothness \ge \xdim - \neqconstrs\).
    Then, the set \(\goodconstrs \subset \contdiff(\reals^{\totaldim_1}) \times
    \ldots \times \contdiff(\reals^{\totaldim_\nswitches}) \times \ineqconstrs \times
    \eqconstrs\) defined by
    \begin{equation}
        \goodconstrs \coloneqq \set{\tuple{\switchfct_1, \ldots, \switchfct_\nswitches, \ineqconstr, \eqconstr}
            \with
            \forall \tuple{x, z} \in \feasible(\switchfct, \ineqconstr, \eqconstr)
            \text{ LIKQ holds at } \tuple{x, z}
        }
    \end{equation}
    is an open and dense subset of \(\contdiff(\reals^{\totaldim_1}) \times
    \ldots \times \contdiff(\reals^{\totaldim_\nswitches}) \times
    \contdiff(\reals^\totaldim; \reals^\nineqconstrs) \times
    \contdiff(\reals^\totaldim; \reals^\neqconstrs)\) with respect to the
    strong \(\contdiff\)-Whitney topology.
\end{theorem}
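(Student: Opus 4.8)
The plan is to derive the statement from the structured jet-transversality Theorem~\ref{thm:structured-jet-transversality}, using Lemma~\ref{lem:simple-jets} to mediate between the genuine variable \(\tuple{\optvar, \absvar, \switchvar} \in \reals^\totaldim\) and the formally independent inputs that the structured jet demands, and Theorem~\ref{thm:likq-by-transversality} to recognise the resulting transversality condition as the desired LIKQ-everywhere property. Accordingly, I take the \(\nfcts \coloneqq \nswitches + 2\) functions \(\switchfct_1, \ldots, \switchfct_\nswitches, \ineqconstr, \eqconstr\), with image dimensions \(1, \ldots, 1, \nineqconstrs, \neqconstrs\), so that the total value dimension is \(\nswitches + \nineqconstrs + \neqconstrs = \totalconstrs\) and the product space in Theorem~\ref{thm:structured-jet-transversality} is exactly the space \(\contdiff(\reals^{\totaldim_1}) \times \cdots \times \contdiff(\reals^{\totaldim_\nswitches}) \times \ineqconstrs \times \eqconstrs\) appearing in the statement.

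Next I would construct the left-invertible matrix \(\Pi \in \reals^{N \times \totaldim}\), with \(N \coloneqq \totaldim_1 + \cdots + \totaldim_\nswitches + 2\totaldim\), that distributes \(\tuple{\optvar, \absvar, \switchvar}\) among the arguments of the individual functions: to \(\optvar\) for \(\switchfct_1\), to \(\tuple{\optvar, \absvar_1, \ldots, \absvar_{i-1}, \switchvar_1, \ldots, \switchvar_{i-1}}\) for \(\switchfct_i\), and to the full vector \(\tuple{\optvar, \absvar, \switchvar}\) for each of \(\ineqconstr\) and \(\eqconstr\). Since the two constraint blocks alone already reproduce all of \(\tuple{\optvar, \absvar, \switchvar}\), the matrix \(\Pi\) has full column rank and hence a left inverse, and the structured evaluation~\eqref{eq:def-structured-eval} of \(\tuple{\switchfct_1, \ldots, \switchfct_\nswitches, \ineqconstr, \eqconstr} \after \Pi\) is precisely the combined function \(\tuple{\switchfct, \ineqconstr, \eqconstr}\), where \(\switchfct\) is assembled via~\eqref{eq:def-switch-fct}. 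Applying Lemma~\ref{lem:simple-jets} with the stratification \(\stratification\) of \(\stratifiedset\) from~\eqref{eq:def-stratified-set} then produces a stratification \(\largestratification\) of \(\largestratifiedset \coloneqq \left[\begin{smallmatrix}\Pi & 0\\ 0 & \Id{\totalconstrs}\end{smallmatrix}\right]\stratifiedset\) together with the equivalence \(\strjet{0}\tuple{\switchfct_1, \ldots, \switchfct_\nswitches, \ineqconstr, \eqconstr} \transversal \largestratification \iff \jet{0}\tuple{\switchfct, \ineqconstr, \eqconstr} \transversal \stratification\). By Theorem~\ref{thm:likq-by-transversality} the right-hand side holds exactly when LIKQ holds at every feasible point, so \(\goodconstrs\) coincides with the transversality set of Theorem~\ref{thm:structured-jet-transversality}.

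It then remains to verify the hypotheses of Theorem~\ref{thm:structured-jet-transversality}. Weak Whitney regularity of \(\largestratification\) follows from part~\ref{it:is-Whitney-regular} of Lemma~\ref{lem:simple-jets}, since \(\stratification\) is weakly Whitney regular by Lemma~\ref{lem:statum-tspace}. From the tangent-space description there, the stratum \(\stratum[\signature, \NOIDEA]\) has dimension \(\xdim + \rank(\diag(\signature)) + \rank(\diag(\NOIDEA))\), so \(\dim(\stratification) = \xdim + \nswitches + \nineqconstrs\); as the block map is injective this is also \(\dim(\largestratification)\), and I would check that the smoothness assumption \(\smoothness \ge \xdim - \neqconstrs\) is exactly the quantitative input that yields the required bound \(\dim(\largestratification) < \smoothness + \totalconstrs\). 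For openness I would note that \(\stratifiedset\) is the intersection of the closed conditions \(\jval{\absvar} = \abs{\jval{\switchvar}}\), \(\jval{\switchvar} = \jval{\switchfct}\), \(\jval{\ineqconstr} \ge 0\), \(\jval{\eqconstr} = 0\), hence closed, and that its image \(\largestratifiedset\) under the injective linear block map is again closed; Theorem~\ref{thm:structured-jet-transversality} then makes \(\goodconstrs\) open in the Whitney \(\contdiff[1]\)-topology and therefore in the strong \(\contdiff\)-topology, while density holds unconditionally. The genuinely non-mechanical steps are the design of \(\Pi\) so that the structured evaluation reproduces the triangular switching function~\eqref{eq:def-switch-fct}, and the dimension bookkeeping that converts the smoothness hypothesis into the strict bound \(\dim(\largestratification) < \smoothness + \totalconstrs\); everything else is an assembly of the three results above.
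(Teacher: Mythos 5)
Your proposal is correct and takes essentially the same route as the paper's own proof: the identical block operator \(\Pi\) (left-invertible thanks to the identity blocks feeding \(\ineqconstr\) and \(\eqconstr\)), Lemma~\ref{lem:statum-tspace} for weak Whitney regularity, Lemma~\ref{lem:simple-jets} to transfer the stratification, its regularity and the transversality condition, Theorem~\ref{thm:likq-by-transversality} to identify the transversality set with \(\goodconstrs\), and Theorem~\ref{thm:structured-jet-transversality} to conclude density and openness — if anything you are slightly more careful than the paper, which never explicitly verifies closedness of \(\largestratifiedset\) before invoking the openness part. The one point to flag is a caveat you share with the paper: since \(\dim(\stratification) = \xdim + \nswitches + \nineqconstrs\) and \(\totalconstrs = \nswitches + \nineqconstrs + \neqconstrs\), the strict bound \(\dim(\largestratification) < \smoothness + \totalconstrs\) required by Theorem~\ref{thm:structured-jet-transversality} is equivalent to \(\smoothness > \xdim - \neqconstrs\), so the stated hypothesis \(\smoothness \ge \xdim - \neqconstrs\) yields only the non-strict inequality in the borderline case \(\smoothness = \xdim - \neqconstrs\) — an off-by-one that your claimed check would surface but that the paper's proof also leaves unaddressed.
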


\begin{proof}
    Let \(\nfcts \coloneqq \nswitches + 2\).
    For \(i \in \oneto{\nswitches}\) let \(\totaldim_i \coloneqq \xdim + 2(i-1)\) and
    \(\totaldim_{\nswitches+1} \coloneqq \totaldim_{\nswitches+2} \coloneqq \totaldim\).
    The matrices \(\reals^{\totaldim_i \times \totaldim}\) with
    \begin{equation}
        \Pi_i \begin{bmatrix} x & y & z \end{bmatrix}^T \coloneqq
        \begin{bmatrix} x & y_1 & \ldots & y_{i-1} & z_1 & \ldots & z_{i-1} \end{bmatrix}^T,
        \quad i \in \oneto{\nswitches}
    \end{equation}
    generate the required inputs for \(i\)-th line of the switching function
    \(\switchfct_i\).
    By setting \(\Pi_{\nswitches+1} \coloneqq \Pi_{\nswitches+2} \coloneqq \Id{\totaldim}\)
    and \(\enlarged{\totaldim} \coloneqq \totaldim_1 + \ldots + \totaldim_\nfcts\)
    the combined matrix
    \begin{equation}
        \Pi \coloneqq
        \begin{bmatrix} \Pi_1 \\ \vdots \\ \Pi_{\nfcts} \end{bmatrix}
        \in \reals^{\enlarged{\totaldim} \times \totaldim}
    \end{equation}
    when applied to \((x, y, z) \in \reals^\totaldim\) provides valid inputs
    for the structured evaluation of \(\enlarged{\totalfct} \coloneqq (\switchfct_1, \ldots,
    \switchfct_\nswitches, \ineqconstr, \eqconstr)\) in the sense that
    \begin{equation}
        \enlarged{\totalfct}(\Pi(x, y, z))
        = (\switchfct_1(x),
\ldots,
            \switchfct_\nswitches(x, y_1, \ldots, y_{\nswitches-1}, z_1, \ldots, z_{\nswitches-1}),
            \ineqconstr(x, y, z),
            \eqconstr(x, y, z)
            ).
    \end{equation}
    Moreover, \(\Pi\) is left-invertible, as the last submatrix
    \(\Pi_{\nswitches + 2}\) is an identity on the input, and it holds
    \(\enlarged{\totalfct} \after \Pi = (\switchfct, \ineqconstr, \eqconstr) \in
    \contdiff(\reals^\totaldim; \reals^\totalconstrs)\).
    Since the stratification \(\stratification\) of the \(\stratifiedset\)
    defined in~\eqref{eq:def-stratified-set} is weakly Whitney regular,
    the Assertion~\ref{it:is-stratification} and~\ref{it:is-Whitney-regular} of
    Lemma~\ref{lem:simple-jets} show that \(\tilde{\stratification}\)
    is a Whitney regular stratification of \(\tilde{\stratifiedset}\), where
    \begin{equation}
        \tilde{\stratification}
        \coloneqq \set*{\begin{bmatrix}
                \Pi & 0 \\
                0 & \Id{\totalconstrs}
            \end{bmatrix}
            \stratum[\sigma, \omega] \with \sigma \in \set{-1, 0, 1}^\nswitches, \omega \in \set{0, 1}^\nineqconstrs
        }
        \quad\text{and}\quad
        \tilde{\stratifiedset} \coloneqq
        \begin{bmatrix}
            \Pi & 0 \\
            0 & \Id{\totalconstrs}
        \end{bmatrix} \stratifiedset.
    \end{equation}
    Since the transformation that is applied to each stratum is left-invertible
    by construction, it is bijective onto its image, and hence,
    \(\dim(\largestratification) = \dim(\stratification) \le \xdim + \nswitches
    + \nineqconstrs\).
    Now Theorem~\ref{thm:structured-jet-transversality} shows that
    the set \(\set{(\switchfct_1, \ldots, \switchfct_\nswitches, \ineqconstr,
    \eqconstr) \with \jet{0}(\switchfct_1, \ldots, \switchfct_\nswitches,
    \ineqconstr, \eqconstr) \transversal \tilde{\stratification}}\)
    is an open and dense subset of \(\contdiff(\reals^{\totaldim_1}) \times
    \ldots \times \contdiff(\reals^{\totaldim_\nswitches}) \times
    \contdiff(\reals^\totaldim; \reals^\nineqconstrs) \times
    \contdiff(\reals^\totaldim; \reals^\neqconstrs)\) with respect to the
    strong Whitney topology.
    Further, Lemma~\ref{lem:simple-jets}~\ref{it:is-transversal} ensures that
    \begin{equation}
        \set{(\switchfct_1, \ldots, \switchfct_\nswitches, \ineqconstr, \eqconstr) \with
            \jet{0}(\switchfct_1, \ldots, \switchfct_\nswitches, \ineqconstr, \eqconstr) \transversal \tilde{\stratification}
        }
        =
        \set{(\switchfct_1, \ldots, \switchfct_\nswitches, \ineqconstr, \eqconstr) \with
            \jet{0}((\switchfct, \ineqconstr, \eqconstr)) \transversal \stratification
        },
    \end{equation}
    while Theorem~\ref{thm:likq-by-transversality} provides the required
    connection to the LIKQ condition.
\end{proof}
\begin{remark}[genericity in other settings]\label{rmk:genericity-other-settings}
    In view of Remark~\ref{rmk:no-additional-constraints} and
    Remark~\ref{rmk:no-free-switching-var} one might wonder if the result of
    Theorem~\ref{thm:genericity-of-likq} still holds if the problems are
    formulated without additional inequality and equality constraints, or when
    the functions \(\switchfct\), \(\ineqconstr\) and \(\eqconstr\) are
    formulated without an explicit dependence on the switching variable
    \(\switchvar\).

    In the first case of problem formulations without additional constraints
    \(\ineqconstr\), \(\eqconstr\), the injectivity of the operator \(\Pi\)
    can not be concluded as in the previous proof.
    Similar to the argument in Remark~\ref{rmk:no-free-switching-var}, this can
    be fixed by adding an artificial function \(\psi\) that takes all inputs
    \(\optvar\), \(\absvar\) and \(\switchvar\) and whose value is reduced in a
    structured jet that together with \(\stratifiedset\) encodes feasibility.
    The inputs to \(\psi\) are then again taken care of by an identity block in
    the last \(\totaldim\) rows of \(\Pi\) which ensures the injectivity just
    as before.

    Thus, in both cases the final application of the
    structured-jet-transversality theorem in the proof of
    Theorem~\ref{thm:genericity-of-likq} ensures that the functions that have
    LIKQ at every feasible points form an open and dense set.
    Removing the artificial function \(\psi\) to obtain a clear result can
    simply be done by projecting onto the other components.
    This projection preserves the openness and the density.
\end{remark}
 
\section{Conclusion and Outlook}
\label{sec:conclusion}
For abs-smooth optimization problems, the property LIKQ serves as a
qualification of the nonsmoothness that allows to verify the optimality of a
given point in polynomial time.
The main result of the paper at hand states that requiring LIKQ at all feasible
points of an abs-smooth problem is a generic assumption in that the set of
problems for which this is true is dense and open in the strong Whitney
topology.

Optimality conditions for large classes of nonsmooth optimization problems were
derived in~\cite{GW16} and~\cite{HS20}.
As a next step, future research could aim at developing a topologically
meaningful stationarity definition, i.e., one that corresponds to a topological
change of the sub-level sets, and prove that generically all stationary points
in this sense satisfy an associated non-degenericity condition.
Most likely this can be achieved using similar arguments as the once used in
this paper.
In particular one requires the \(1\)-jet, i.e., a jet the incorporates the
first derivatives of the involved functions and a stratified set that encodes
the topological stationarity condition on images of that jet.
Using again Lemma~\ref{lem:simple-jets} this jet should be relatable to a
structured jet and the application of
Theorem~\ref{thm:structured-jet-transversality} then should provide the
genericity result.
Similar results for other classes of nonsmooth optimization problems have been
published in~\cite{JRS09, DSS12, DJS13, LS22}.
 
\section*{Acknowledgements}
The authors are very grateful Vladimir Shikhman for his thorough introduction
to the theory of jet-transversality and numerous discussions on this subject.

The work on this paper was partly funded by the Deutsche Forschungsgemeinschaft
(DFG, German Research Foundation) under Germany´s Excellence Strategy – The
Berlin Mathematics Research Center MATH+ (EXC-2046/1, project ID: 390685689).

\printbibliography 

\end{document}